\newcommand{\beq}{\begin{equation}}
\newcommand{\eeq}{\end{equation}}
\newcommand{\ben}{\begin{eqnarray}}
\newcommand{\een}{\end{eqnarray}}
\newcommand{\beno}{\begin{eqnarray*}}
\newcommand{\eeno}{\end{eqnarray*}}
\newtheorem{thm}{Theorem}[section]
\newtheorem{lem}[thm]{Lemma}
\newtheorem{prop}[thm]{Proposition}
\renewcommand{\theequation}{\thesection.\arabic{equation}}
\title{\textbf{Harmonic approximation and improvement of flatness in a singular perturbation problem}}
\author{Kelei Wang
\thanks{The author is supported by NSFC No. 11301522. I would like to thank Prof. Fang Hua Lin for simulating discussions which clarify the geometric meanings of some arguments in this paper.}
\\
{\small  Wuhan Institute of Physics and Mathematics,}\\
{\small Chinese Academy of Sciences, Wuhan 430071, China}\\
{\small wangkelei@wipm.ac.cn } \ }
\date{}
\begin{document}
\maketitle
\begin{abstract}
We study the De Giorgi type conjecture, that is, one dimensional symmetry problem for
entire solutions of an two components elliptic system in $\mathbb{R}^n$,
for all $n\geq 2$. We prove that, if a solution $(u,v)$ has a linear
growth at infinity, then it is one dimensional, that is, depending
only on one variable. The main ingredient is an improvement of
flatness estimate, which is achieved by the harmonic approximation
technique adapted in the singularly perturbed situation.
\end{abstract}

\noindent {\sl Keywords:} {\small elliptic systems, phase
separation, one dimensional symmetry, harmonic approximation.}\

\vskip 0.2cm

\noindent {\sl AMS Subject Classification (2010):} {\small 35B06,
35B08, 35B25, 35J91.}

\renewcommand{\theequation}{\thesection.\arabic{equation}}
\setcounter{equation}{0}


\section{Introduction}
\numberwithin{equation}{section}
 \setcounter{equation}{0}
 In this paper, we continue our study in \cite{W} on the De Giorgi type conjecture, i.e. one dimensional symmetry problem for
solutions of the following two component elliptic system in
$\mathbb{R}^n$:
\begin{equation}\label{equation}
\Delta u=uv^2,\ \ \Delta v=vu^2,\ \ u,v>0~~\text{in}~~\mathbb{R}^n.
\end{equation}
We remove the energy minimizing condition in \cite{W} and prove the
one dimensional symmetry only under the linear growth condition.
More precisely we prove
\begin{thm}\label{main result}
If $(u,v)$ is a solution of the problem \eqref{equation}, and there
exists a constant $C>0$ such that for any $x\in\mathbb{R}^n$,
\begin{equation}\label{linear growth}
u(x)+v(x)\leq C(1+|x|),
\end{equation}
then after a suitable rotation in $\mathbb{R}^n$,
\[u(x)\equiv u(x_n),\ \ \ \ v(x)\equiv v(x_n).\]
\end{thm}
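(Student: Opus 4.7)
The strategy is to combine a blowdown analysis with an improvement of flatness estimate obtained via harmonic approximation, and then to iterate the latter to propagate the one-dimensional symmetry from infinity back to all of $\mathbb{R}^n$.

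For the blowdown, set $u_R(x):=u(Rx)/R$ and $v_R(x):=v(Rx)/R$. The linear growth \eqref{linear growth} gives uniform $L^\infty$ estimates on compact sets, and the rescaled pair solves
\[
\Delta u_R = R^4\, u_R v_R^2,\qquad \Delta v_R = R^4\, v_R u_R^2,
\]
which is the strongly competing regime. Uniform Lipschitz estimates for such systems (Caffarelli--Lin, Noris--Tavares--Terracini--Verzini) give, along a subsequence $R_k\to\infty$, local uniform convergence to a pair $(U,V)$ with $UV\equiv 0$ and $w:=U-V$ harmonic on $\mathbb{R}^n$. Linear growth and Liouville force $w=\alpha\,e\cdot x$; since $u,v$ are both positive and nontrivial one has $\alpha>0$; after a rotation the blowdown is $(\alpha x_n^+,\alpha x_n^-)$, which is one-dimensional.

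The technical heart is the improvement of flatness. Quantify the closeness to a one-dimensional profile by
\[
\theta(R,e,\alpha):=\frac{1}{\alpha R}\sup_{B_R}\bigl(|u-\alpha(e\cdot x)^+|+|v-\alpha(e\cdot x)^-|\bigr),
\]
and seek $\varepsilon_0>0$, $\rho\in(0,1/2)$ such that $\theta(R,e,\alpha)\le\varepsilon\le\varepsilon_0$ yields an improved profile $(e',\alpha')$ with $|e'-e|+|\alpha-\alpha'|/\alpha\lesssim\varepsilon$ and
\[
\theta(\rho R,e',\alpha')\le\tfrac12\,\theta(R,e,\alpha).
\]
The linearization is driven by $w:=u-v$, which satisfies $\Delta w+u v\,w=0$ exactly; since $uv\to 0$ in the blowdown, the renormalized defect
\[
\phi_j(y):=\frac{u(R_jy)-v(R_jy)-\alpha_jR_j(e_j\cdot y)}{\varepsilon_j\,\alpha_j\,R_j}
\]
should converge, up to extraction, to a bounded function $\Phi$ harmonic in all of $B_1$. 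A first-order Taylor expansion of $\Phi$ at the origin then produces a new direction $e'$ (from the tangential part of $\nabla\Phi(0)$) and slope $\alpha'$ (from the normal part), and a contradiction argument, assuming failure of the improvement along a sequence $\varepsilon_j\to 0$, closes the estimate.

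The main obstacle is the harmonic approximation itself: establishing compactness of $\phi_j$ across the nodal hyperplane $\{e\cdot y=0\}$ and identifying the limit $\Phi$ as genuinely harmonic there, when the singular interaction $R_j^4\,u v^2$ concentrates in a thin neighborhood of the free boundary. Away from the interface this is routine since the interaction is negligible. Near the interface I would rely on an Alt--Caffarelli--Friedman type monotonicity formula to control the interaction measure $u v^2\,dx$ and on $C^{1,\gamma}$ regularity of the limiting free boundary to glue the two halves of $\phi_j$ into a single harmonic limit. Once the improvement of flatness is in place, iterating it dyadically starting at a large scale where the blowdown provides arbitrarily small flatness yields a summable sequence of direction corrections $e_k\to e_\infty$ together with $\theta(\rho^k R_0,e_k,\alpha_k)\to 0$ geometrically. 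Applied at every base point this forces $u(x)=u(e_\infty\cdot x)$ and $v(x)=v(e_\infty\cdot x)$, completing the proof.
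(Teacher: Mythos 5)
Your proposal sets up a Savin-type sup-norm improvement of flatness, whereas the paper works on the \emph{variational} side with an $L^2$ excess quantity $\int_{B_1}|\nabla u_\kappa-\nabla v_\kappa-e|^2$. These are genuinely different routes, and the difference matters at exactly the point you flag as ``the main obstacle.'' Your plan is to blow up the equation $\Delta w + uv\,w = 0$ and show that the normalized defect $\phi_j$ converges to a single harmonic function across the interface, using ACF monotonicity and $C^{1,\gamma}$ regularity of the limiting free boundary to ``glue the two halves.'' This is not a filled-in argument: the singular term $\kappa\, u_\kappa v_\kappa$ concentrates near $\{x_n=0\}$ and does not pass to a pointwise limit, so there is no direct way to identify the PDE satisfied by $\Phi$ across the nodal set from the equation alone. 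Moreover, invoking $C^{1,\gamma}$ regularity of the free boundary at this stage risks circularity, since controlling the free boundary is essentially what the improvement of flatness is meant to deliver. You also need $\phi_j$ to be uniformly bounded in order to extract a limit; in the sup-norm framework this requires a Harnack-type inequality in the spirit of Savin, which is itself a substantial piece of work and is not present in your sketch.

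The paper sidesteps this entire difficulty by not blowing up the equation at all. Instead it blows up the \emph{stationary condition} (the first inner variation identity) for $(u_\kappa,v_\kappa)$. In integrated form the interaction terms enter only as $\int \kappa u_\kappa^2 v_\kappa^2\,\mathrm{div}\,Y$ and $\int|\nabla u_\kappa||\nabla v_\kappa|$, both of which are $O(\kappa^{-1/4})$ by a priori estimates (these are the $R^{n-1}$-growth estimates in Section~3); so the concentration near the interface is completely harmless. With the $L^2$ normalization, the $H^1$ bound on $w_\kappa$ is immediate, the weak limit $w$ is harmonic by passing to the limit in the stationary identity, and the crucial upgrade from weak to strong $H^1_{loc}$ convergence is done via a Caccioppoli-type argument showing the defect measure (supported on $\{x_n=0\}$) actually vanishes. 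None of these steps has a counterpart in your sketch. So while the overall scaffolding (blow down, improve flatness, iterate) is the same, the key technical idea that makes the harmonic approximation work is missing from your proposal, and the tools you offer in its place do not obviously close the gap.
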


The linear growth condition is sharp, as shown by the examples
constructed in \cite{BTWW}, where $u-v$ is asymptotic to a
homogeneous harmonic polynomial of degree $d\geq 2$. For more discussions on \eqref{equation}, we refer to \cite{blwz, BTWW,C-L 2,NTTV}.

Through suitable rescalings, the problem \eqref{equation} is
closely related to the following singularly perturbed problem (see Theorem \ref{thm preliminary} below)
\begin{equation}\label{equation scaled}
\left\{
\begin{aligned}
 &\Delta u_\kappa=\kappa u_\kappa v_\kappa^2, \\
 &\Delta v_\kappa=\kappa v_\kappa u_\kappa^2,
                          \end{aligned} \right.
\end{equation}
which is used to describe the ``phase separation" phenomena.
When $\kappa\to+\infty$,
the convergence of solutions $(u_\kappa,v_\kappa)$ of
\eqref{equation scaled} and their singular limit were studied by
Caffarelli and Lin \cite{C-L 2}, Noris-Tavares-Terracini-Verzini
\cite{NTTV} and Tavares-Terracini \cite{TT2011} (see also
Dancer-Zhang and the author \cite{DWZ2011}).

 The main ingredient of our proof is an improvement of flatness estimate for the singular perturbation problem
 \eqref{equation scaled}, which is achieved by the blow up (harmonic approximation) technique. This type of arguments, first introduced by De Giorgi in his work on
the regularity of minimal hypersurfaces \cite{D},
 are by now classical in the elliptic regularity theory. It plays an important role in the establishment of many
 $\varepsilon$
-regularity theorems, such as in the theory of stationary varifolds
(cf. Allard \cite{Allard}, see also \cite[Section 6.5]{Lin} for
an account),
 harmonic maps (cf. L. Simon \cite{Simon}) and nonlinear elliptic systems
 (the indirect method, see for example Chen-Wu \cite[Chapter 12]{Chen-Wu}), just to name a few examples.

In singular perturbation problems, Savin's proof of
 the De Giorgi conjecture for Allen-Cahn equation \cite{Savin} also uses an improvement of flatness estimate and the harmonic
 approximation type argument. However, there
 the quantity to be improved is different from the classical \textit{energy} quantity. Indeed, the method developed in \cite{Savin} is mainly on the viscosity (or Krylov-Safonov) side
 and corresponds to the Harnack
  inequality approach to the regularity of minimal hypersurfaces as developed in Caffarelli-Cordoba
  \cite{C-C}.

In this paper we will explore some aspects of harmonic approximation
arguments in the singular perturbation problem \eqref{equation
scaled}, from the \textit{variational} side. Thus in our estimate we still
use an energy type quantity, which is similar to the \textit{excess} used in Allard's regularity theory. In this sense, our method may be viewed as
  a direct generalization of the classical harmonic approximation technique in this singular perturbation
  problem. 
  
However, in order to get a harmonic function in the blow up limit, we use the stationary
   condition arising from the equation, but not the equation \eqref{equation scaled} itself.
Let us first recall the stationary condition. Given a
$\kappa>0$ fixed, any solution of \eqref{equation scaled},
$(u_\kappa,v_\kappa)$ is smooth. Let $Y$ be a smooth vector
field with compact support, then by considering domain
variations in the form
\[u_\kappa^t(x):=u_\kappa(x+tY(x)),\ \ \ \ \ v_\kappa^t(x):=v_\kappa(x+tY(x)),\ \mbox{for}\ |t|\ \mbox{small},\]
we have
\[\frac{d}{dt}\int\left(|\nabla u_\kappa^t(x)|^2+|\nabla v_\kappa^t(x)|^2+\kappa u_\kappa^t(x)^2
v_\kappa^t(x)^2\right)dx\Big|_{t=0}=0.\] Through some integration by parts we
obtain the stationary condition for $(u_\kappa,v_\kappa)$,
\begin{equation}\label{stationary condition 0}
\int\left(|\nabla u_\kappa|^2+|\nabla v_\kappa|^2+\kappa
u_\kappa^2v_\kappa^2\right)\mbox{div}Y
-2DY(\nabla u_\kappa,\nabla
u_\kappa)-2DY(\nabla v_\kappa,\nabla v_\kappa)=0.
\end{equation}
Here $\mbox{div}$ is the divergence operator, and for a function
$u$,
\[DY(\nabla u,\nabla u)=\sum_{\alpha,\beta=1}^n\frac{\partial Y^\alpha}{\partial x_\beta}\frac{\partial u}{\partial x_\alpha}\frac{\partial u}{\partial x_\beta}.\]

For the problem \eqref{equation scaled}, we have better control and
convergence on the energy level, while the equation itself is badly
behaved. This is why we choose to blow up the stationary condition
to get a harmonic function in the limit.
    Here we would like to mention that the stationary condition appears more naturally in some other
singular perturbation problems, such as the Allen-Cahn model (cf.
Hutchinson-Tonegawa \cite{H-T}) and the Ginzburg-Landau model (cf.
Bethuel-Brezis-Orlandi \cite{B-B-O}). In these problems, the
stationary condition is directly linked to the limit problem, i.e.
the stationary condition for varifolds (in the sense of Allard
\cite{Allard}).

In the remaining part of this paper, a solution $(u,v)$ of the
problem \eqref{equation} will be fixed. We use the notation
$a_\kappa=O(b_\kappa)$, if there exists a constant $C$
such that, as $\kappa\to+\infty$,
\[|a_\kappa|\leq Cb_\kappa,\]
and we say $a_\kappa=o(b_\kappa)$ if
\[\lim_{\kappa\to+\infty}\frac{a_\kappa}{b_\kappa}=0.\]
We use $C$ to denote various universal constants, which are
independent of the base point $x\in\mathbb{R}^n$ and the radius $R$.
(In some cases it depends on the solution itself.) It may be
different from line to line. $H^s$ is used to denote the $s-$dimensional
Hausdorff measure.

\section{The improvement of flatness}
\numberwithin{equation}{section}
 \setcounter{equation}{0}

First, to explain why our main Theorem \ref{main result} is related to a singular
perturbation problem, let us recall the following result, which is
essentially \cite[Lemma 5.2]{W}.
\begin{thm}\label{thm preliminary}
For any $\varepsilon>0$, there exists an $R_0$ such that if $R\geq
R_0$ and $x_0\in\{u=v\}$, by defining
\begin{equation}\label{scaled solutions}
u_\kappa(x):=\frac{1}{R}u(x_0+Rx),\ \ \ \ v_\kappa(x):=\frac{1}{R}v(x_0+Rx),
\end{equation}
there exists a constant $c_0$ independent of
$x_0\in\{u=v\}$ and $R$, and a vector $e$ satisfying $|e|\geq c_0$,
such that
\[\int_{B_1(0)}|\nabla u_\kappa-\nabla v_\kappa-e|^2\leq\varepsilon^2.\]
\end{thm}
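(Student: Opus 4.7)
The plan is to argue by contradiction, using the standard compactness theory for the singular perturbation system~\eqref{equation scaled} to extract a harmonic limit, and then Liouville plus the hypothesis $x_0\in\{u=v\}$ to identify that limit.

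First I would spell out the rescaled equations. With the normalization in \eqref{scaled solutions}, a direct calculation shows that $(u_\kappa, v_\kappa)$ solves \eqref{equation scaled} with $\kappa = R^4$, and the linear growth hypothesis \eqref{linear growth} passes to the uniform bound
\[
u_\kappa(x) + v_\kappa(x) \leq C\bigl(R^{-1} + |x| + R^{-1}|x_0|\bigr)
\]
on any fixed ball $B_\rho(0)$. In \cite{W} a two-sided linear bound on $|x_0|$ in terms of $R$ (from $u(x_0)=v(x_0)$ combined with non-degeneracy) ensures this is uniform in $x_0$ and $R$, so we have a locally uniform $L^\infty$ bound on $(u_\kappa, v_\kappa)$.

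Suppose the conclusion fails. Then there exist $\varepsilon_0>0$, sequences $R_k\to\infty$, $x_k\in\{u=v\}$, and $c_0=1/k\to 0$ such that the corresponding rescalings $(u_k, v_k):=(u_{\kappa_k}, v_{\kappa_k})$ satisfy
\[
\int_{B_1(0)} |\nabla u_k - \nabla v_k - e|^2 > \varepsilon_0^2
\qquad \text{for every } e\in\mathbb{R}^n \text{ with } |e|\geq 1/k.
\]
By the compactness theory of Caffarelli--Lin~\cite{C-L 2} and Noris--Tavares--Terracini--Verzini~\cite{NTTV} (applied in the setting of \cite{W} for this specific system), after passing to a subsequence $u_k\to u_\infty$ and $v_k\to v_\infty$ locally uniformly and strongly in $H^1_{\mathrm{loc}}(\mathbb{R}^n)$, with $u_\infty v_\infty\equiv 0$, each factor nonnegative and harmonic on its positivity set, and with the limits inheriting linear growth. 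In particular $w_\infty := u_\infty - v_\infty$ is harmonic across $\{u_\infty = v_\infty=0\}$ (this is the content of the analysis of the limit problem, see also \cite{DWZ2011}), and $|w_\infty(x)| \leq C(1+|x|)$ on $\mathbb{R}^n$.

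By the Liouville theorem for harmonic functions of linear growth, $w_\infty$ is affine: $w_\infty(x)=a+e\cdot x$. Since $x_k\in\{u=v\}$ corresponds to the origin after rescaling, $u_k(0)=v_k(0)$ and hence $w_\infty(0)=0$, so $a=0$ and $w_\infty(x)=e\cdot x$. The strong $H^1_{\mathrm{loc}}$ convergence then yields
\[
\int_{B_1(0)} |\nabla u_k - \nabla v_k - e|^2 \longrightarrow 0,
\]
which contradicts the standing inequality as soon as $|e|\geq 1/k$ eventually holds. It therefore remains to exclude the possibility $e = 0$, and this is the non-degeneracy step that supplies the positive constant $c_0$: if a subsequence produced $e=0$, then $w_\infty\equiv 0$ on $\mathbb{R}^n$, so both $u_\infty$ and $v_\infty$ vanish identically on a neighborhood of the origin, but the Alt--Caffarelli--Friedman monotonicity formula applied to $(u,v)$ at the center $x_k$ (with the uniform positivity of the ACF quantity at unit scale coming from $x_k\in\{u=v\}$ together with the nontriviality of $(u,v)$, as in \cite{W}) forces a quantitative lower bound on $\|\nabla u_\infty\|_{L^2(B_1)}^2 + \|\nabla v_\infty\|_{L^2(B_1)}^2$, hence on $|e|$. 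This gives the claimed $c_0>0$ and closes the contradiction.

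The main obstacle I expect is the non-degeneracy bound $|e|\geq c_0$; the compactness and Liouville parts are by now standard for this class of segregation problems, but extracting the uniform lower bound requires ACF-type monotonicity information at base points on $\{u=v\}$, which one would import from \cite{W}.
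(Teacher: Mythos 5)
Your proof is correct and follows essentially the same route as the paper: the paper cites \cite[Lemma 5.2]{W}, which already supplies the $L^\infty$ approximation by $e\cdot x$ and the lower bound $|e|\geq c_0$ via exactly the compactness/Liouville/ACF chain you run here, and then only adds the upgrade from $L^\infty$ to $H^1$ via the strong $H^1$ compactness of \cite{NTTV} and the interior uniform H\"older estimate of \cite{W} --- you simply inline that argument instead of citing it. One imprecision worth fixing: there is no ``two-sided linear bound on $|x_0|$ in terms of $R$'' (the base point and the scale are independent); the correct way to get locally uniform $L^\infty$ bounds on $(u_\kappa,v_\kappa)$ is to use the global Lipschitz estimate $\sup(|\nabla u|+|\nabla v|)<\infty$ from \cite[Theorem 5.1]{W} together with the fact that $u=v$ is uniformly bounded on the set $\{u=v\}$, which gives $u_\kappa(x)\leq R^{-1}u(x_0)+C|x|\leq C(1+|x|)$ regardless of $|x_0|$, rather than feeding the linear growth hypothesis through $x_0+Rx$.
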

Note that $(u_\kappa,v_\kappa)$ satisfies \eqref{equation scaled} with
$\kappa=R^4$. For a proof of this theorem see \cite[Lemma 5.2]{W}. The only new point is that, if
\[\sup_{B_2(0)}|u_\kappa-v_\kappa-e\cdot x|^2\leq\varepsilon^2,\]
then $u_\kappa-v_\kappa$ is also close to $e\cdot x$ in $H^1(B_1(0))$ topology.
 This can be proved by a contradiction argument, using the $H^1$ strong convergence for solutions
  of \eqref{equation scaled} (cf. \cite[Theorem 1.2]{NTTV}). Note that we can replace the global uniform H\"{o}lder estimate used in \cite{NTTV} by the interior uniform H\"{o}lder estimate \cite[Theorem 2.6]{W}.

Throughout this paper, $(u_\kappa,v_\kappa)$ always denotes a solution
defined as in \eqref{scaled solutions}. The following improvement of
decay estimate will be the main ingredient in our proof of Theorem
\ref{main result}.
\begin{thm}\label{thm decay estimate}
There exist four universal constants $\theta\in(0,1/2)$, $\varepsilon_0$ small and $K_0, C(n)$ large such that, if $(u_\kappa,v_\kappa)$ is a solution of \eqref{equation scaled} in $B_1(0)$, satisfying
\begin{equation}\label{small condition}
\int_{B_1(0)}|\nabla u_\kappa-\nabla v_\kappa-e|^2=\varepsilon^2\leq \varepsilon_0^2,
\end{equation}
where $e$ is a vector satisfying $|e|\geq c_0/2$, and $\kappa^{1/4}\varepsilon^2\geq K_0$, then there exists another vector $\tilde{e}$, with
\[|\tilde{e}-e|\leq C(n)\varepsilon,\]
such that
\[\theta^{-n}\int_{B_\theta(0)}|\nabla u_\kappa-\nabla v_\kappa-\tilde{e}|^2\leq \frac{1}{2}\varepsilon^2.\]
\end{thm}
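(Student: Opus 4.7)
The plan is a standard compactness/blow-up argument of harmonic-approximation type. Suppose the theorem fails. Then there is a sequence $(u_m,v_m,e_m,\varepsilon_m,\kappa_m)$ of solutions of \eqref{equation scaled} in $B_1$ with $|e_m|\geq c_0/2$, $\int_{B_1}|\nabla u_m-\nabla v_m-e_m|^2=\varepsilon_m^2$, $\varepsilon_m\to 0$, and $\kappa_m^{1/4}\varepsilon_m^2\to\infty$ (in particular $\kappa_m\to\infty$), for which the stated decay fails for every $\tilde{e}$ satisfying $|\tilde{e}-e_m|\leq C(n)\varepsilon_m$. Writing $c_m$ for the $B_1$-mean of $u_m-v_m-e_m\cdot x$, I would introduce the normalized excess
\begin{equation*}
 w_m(x):=\frac{u_m(x)-v_m(x)-e_m\cdot x-c_m}{\varepsilon_m},
\end{equation*}
so that $\int_{B_1}w_m=0$, $\int_{B_1}|\nabla w_m|^2=1$, and by Poincaré $\|w_m\|_{H^1(B_1)}\leq C$. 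After passing to subsequences, $e_m\to e_\infty$ with $|e_\infty|\geq c_0/2$, $c_m\to c_\infty$, and $w_m\rightharpoonup w_\infty$ weakly in $H^1(B_1)$, strongly in $L^2(B_1)$; using the uniform interior Hölder estimates of \cite{W} and the Noris--Tavares--Terracini--Verzini strong $H^1_{\mathrm{loc}}$ segregation theorem, $(u_m,v_m)\to(u_\infty,v_\infty):=\bigl((e_\infty\cdot x+c_\infty)^+,(e_\infty\cdot x+c_\infty)^-\bigr)$ strongly in $H^1_{\mathrm{loc}}(B_1)$.

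The crux is to show that $\Delta w_\infty=0$ weakly in $B_1$. Following the author's hint, I would blow up the stationary identity \eqref{stationary condition 0} rather than the equation itself---the latter being useless because the right-hand side of \eqref{equation scaled} concentrates with nontrivial mass on the free boundary. Test \eqref{stationary condition 0} with vector fields $Y=V\eta$ for constant $V\in\mathbb{R}^n$ and $\eta\in C_c^\infty(B_1)$. The flat limit satisfies $|\nabla u_\infty|^2+|\nabla v_\infty|^2\equiv|e_\infty|^2$ and $\nabla u_\infty\otimes\nabla u_\infty+\nabla v_\infty\otimes\nabla v_\infty\equiv e_\infty\otimes e_\infty$ almost everywhere in $B_1$, so the zeroth-order version of \eqref{stationary condition 0} holds trivially. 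Subtracting this from the $m$-th identity, dividing by $\varepsilon_m$, and passing to the limit, the first-order correction should produce a linear divergence-free identity for $w_\infty$ which, on varying $V$ and using $|e_\infty|\geq c_0/2$, is equivalent to the weak form of $\Delta w_\infty=0$.

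The main obstacle is rigorously justifying this linearisation, which I would split into three technical steps. (i) Upgrading $w_m\rightharpoonup w_\infty$ to \emph{strong} convergence in $H^1_{\mathrm{loc}}(B_1)$, needed in order to pass to the limit in the quadratic quantities of $\nabla w_m$; this can be obtained from a Caccioppoli-type estimate combined with the uniform Lipschitz bound on $(u_m,v_m)$. (ii) Showing that $\varepsilon_m^{-1}\int_{B_1}\kappa_m u_m^2v_m^2(V\cdot\nabla\eta)\to 0$: the transition layer has thickness and amplitude $\sim\kappa_m^{-1/4}$, so $\int_{B_1}\kappa_m u_m^2 v_m^2=O(\kappa_m^{-1/4})$, and the hypothesis $\kappa_m^{1/4}\varepsilon_m^2\to\infty$ is exactly what renders this $o(\varepsilon_m)$. (iii) Identifying the first-order limits of the tensors $\nabla u_m\otimes\nabla u_m$ and $\nabla v_m\otimes\nabla v_m$ in terms of $w_\infty$: this is a direct perturbation away from the free boundary $\{e_\infty\cdot x+c_\infty=0\}$, while across it the interface contributions must be shown to cancel---precisely what the Hopf-type conservation structure of the stationary tensor encodes. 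Once $\Delta w_\infty=0$ is in hand, the standard interior $L^2$ estimate for harmonic functions yields, for every $\theta\in(0,1/2)$,
\begin{equation*}
 \theta^{-n}\int_{B_\theta}\bigl|\nabla w_\infty-\nabla w_\infty(0)\bigr|^2\leq C(n)\theta^{2}\int_{B_1}|\nabla w_\infty|^2\leq C(n)\theta^{2};
\end{equation*}
fix the universal $\theta$ with $C(n)\theta^{2}\leq 1/4$, set $\tilde{e}_m:=e_m+\varepsilon_m\nabla w_\infty(0)$ so that $|\tilde{e}_m-e_m|\leq C(n)\varepsilon_m$, and use the strong $H^1_{\mathrm{loc}}$ convergence to conclude
\begin{equation*}
 \theta^{-n}\int_{B_\theta}|\nabla u_m-\nabla v_m-\tilde{e}_m|^2=\varepsilon_m^2\,\theta^{-n}\int_{B_\theta}|\nabla w_m-\nabla w_\infty(0)|^2\leq\tfrac{1}{2}\varepsilon_m^2
\end{equation*}
for all sufficiently large $m$, contradicting the failure hypothesis.
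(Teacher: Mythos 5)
Your overall architecture---contradiction hypothesis, blow-up normalization $w_m$, linearising the stationary identity rather than the PDE, harmonic limit, interior $L^2$ estimate for the harmonic function, and the use of $\kappa_m^{1/4}\varepsilon_m^2\to\infty$ to kill the $\kappa_m u_m^2 v_m^2$ remainder---is the route the paper takes, and the final contradiction step is set up correctly. But there is a genuine gap at your item (i): strong $H^1_{\mathrm{loc}}$ convergence of $w_m$, which you dismiss as obtainable ``from a Caccioppoli-type estimate combined with the uniform Lipschitz bound on $(u_m,v_m)$.'' That is precisely where the hard work lives; it is the content of the paper's entire Section 5. The naive Caccioppoli identity for $\Delta(u_\kappa-v_\kappa)$ tested against $(u_\kappa-v_\kappa)\eta^2$ produces, after rescaling by $\varepsilon_\kappa$, a dangerous term of the form $\varepsilon_\kappa^{-1}|e|\int\Delta w_\kappa\,\eta^2 x_n$ that cannot be handled by the PDE alone; the paper controls it with a \emph{second} use of the stationary condition, with the specific vector field $Y=(0,\dots,0,\eta^2 x_n)$, and this converts the dangerous term into another quadratic form in $\nabla w_\kappa$. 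To close the argument one then introduces the defect measure $\mu$ with $|\nabla w_\kappa|^2 dx\rightharpoonup|\nabla w|^2 dx+\mu$, shows via exponential decay estimates away from the interface that $\operatorname{supp}\mu\subset\{x_n=0\}$, and plays the two Caccioppoli-type identities against each other to obtain $-\tfrac32\int\eta^2\,d\mu+\int\eta^2\,d\mu^n=0$, where $\mu^n$ is the defect of $(\partial_{x_n}w_\kappa)^2$. Only the elementary inequality $\mu\geq\mu^n$ then forces $\mu=0$. None of this structure---the special vector field, the support of the defect on the free boundary, the mismatch of coefficients $\tfrac32$ versus $1$ and why it must be resolved by comparing $\mu$ with $\mu^n$---is visible in your sketch, and it is not something a Lipschitz bound on $(u_m,v_m)$ supplies.

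Two secondary remarks. First, the NTTV strong $H^1_{\mathrm{loc}}$ segregation theorem gives strong convergence of $(u_m,v_m)$ themselves, not of the rescaled difference quotients $w_m$; these are at different orders of the expansion, and the former does not imply the latter. Second, for the harmonicity of $w$ you only need weak $H^1$ convergence, since the quadratic term carries the extra factor $\varepsilon_m$---so placing step (i) before the harmonicity argument is unnecessary. Strong convergence is needed only at the very end, to transfer the decay of the harmonic limit back to $w_\kappa$. Your step (iii) is also more work than the paper asks for: the estimate $\int_{B_1}|\nabla u_\kappa||\nabla v_\kappa|\leq C\kappa^{-1/4}$ lets one replace $\nabla u_\kappa\otimes\nabla u_\kappa+\nabla v_\kappa\otimes\nabla v_\kappa$ by $\nabla(u_\kappa-v_\kappa)\otimes\nabla(u_\kappa-v_\kappa)$ at negligible cost, and then the expansion in $w_\kappa$ is exact because $u_\kappa-v_\kappa=e\cdot x+\varepsilon_\kappa(w_\kappa+\lambda_\kappa)$; this sidesteps the ``interface cancellation'' you were planning to justify.
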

The proof will be given later. Note that this theorem is not a local
result. It depends on the global Lipschitz estimate established in
\cite{W}, which is stated for solutions of \eqref{equation} defined
on the entire space $\mathbb{R}^n$. 

This decay estimate can be used to prove
\begin{thm}
There exists a constant $C>0$ such that, for any $x\in\{u=v\}$ and
$R>1$, there exists a vector $e_{x,R}$, with
\[|e_{x,R}|\geq c_0/2,\]
such that
\[\int_{B_R(x)}|\nabla u-\nabla v-e_{x,R}|^2\leq CR^{n-1}.\]
\end{thm}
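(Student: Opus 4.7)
The plan is to iterate Theorem~\ref{thm decay estimate} upward in scale, against the linear a priori upper bound on flatness supplied by Theorem~\ref{thm preliminary}. Define
\[\Phi(x,R):=\inf_{e\in\R^n}\int_{B_R(x)}|\nabla u-\nabla v-e|^2,\qquad\psi(x,R):=R^{1-n}\Phi(x,R).\]
Two facts are used throughout: (i) the trivial monotonicity $\Phi(x,r)\le\Phi(x,R)$ for $r\le R$, obtained by testing the minimizer at the larger scale in the smaller ball, which rescales to $\psi(x,r)\le (R/r)^{n-1}\psi(x,R)$; and (ii) Theorem~\ref{thm preliminary}, applied with a sufficiently small $\varepsilon$, produces a universal $R_0$ such that $\psi(x,R)\le \varepsilon^2 R$ for every $x\in\{u=v\}$ and $R\ge R_0$.

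The heart of the proof is the claim that $\psi(x,R)\le C^*:=K_0/\theta^{n-1}$ for all $R\ge R_0$. Arguing by contradiction, suppose $\psi(x,R_*)>C^*$ for some $R_*\ge R_0$, and prove by induction on $k$ that $\psi(x,R_*/\theta^k)\ge (2/\theta)^k C^*$. The base case is the assumption. For the inductive step, monotonicity (i) gives
\[\psi(x,R_*/\theta^{k+1})\ge \theta^{n-1}\psi(x,R_*/\theta^k)\ge (2/\theta)^k K_0\ge K_0,\]
which verifies the hypothesis $\kappa^{1/4}\varepsilon^2\ge K_0$ of Theorem~\ref{thm decay estimate} at scale $R_*/\theta^{k+1}$ when the input is the $L^2$-minimizer. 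The flatness hypothesis follows from (ii), and the norm bound $|e|\ge c_0/2$ on the minimizer follows by Cauchy--Schwarz: the mean $e^*$ stays within $\varepsilon/\sqrt{|B_1|}$ of the vector of norm $\ge c_0$ produced by Theorem~\ref{thm preliminary}, so shrinking $\varepsilon$ if necessary guarantees $|e^*|\ge c_0/2$. Theorem~\ref{thm decay estimate} then delivers $\psi(x,R_*/\theta^k)\le (\theta/2)\psi(x,R_*/\theta^{k+1})$, i.e., $\psi(x,R_*/\theta^{k+1})\ge (2/\theta)^{k+1}C^*$, closing the induction.

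Combining the induction with the linear bound $\psi(x,R_*/\theta^k)\le \varepsilon^2 R_*/\theta^k$ gives $2^k C^*\le \varepsilon^2 R_*$, impossible for $k$ large. Hence $\psi(x,R)\le C^*$ for all $R\ge R_0$. I then set $e_{x,R}$ to be the $L^2$-minimizer when $R\ge R_0$ and put $e_{x,R}:=e_{x,R_0}$ when $1<R<R_0$; the monotonicity $\Phi(x,R)\le\Phi(x,R_0)\le C^*R_0^{n-1}$ absorbs the residual range into the universal constant $C:=C^*R_0^{n-1}$, while $|e_{x,R}|\ge c_0/2$ is inherited from the Cauchy--Schwarz bound above. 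The principal delicacy I expect is controlling the norm of the minimizer uniformly in scale, which is precisely why the argument anchors itself, at every level of the induction, to the vector of norm $\ge c_0$ furnished by Theorem~\ref{thm preliminary}.
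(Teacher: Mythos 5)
Your proposal is correct and follows essentially the same route as the paper: define the scale-invariant excess $\psi(x,R)$, observe the trivial monotonicity across scales, verify the hypotheses of Theorem~\ref{thm decay estimate} (which, read in the unscaled variables, is exactly the statement $\psi(x,\theta R)\le\tfrac{\theta}{2}\psi(x,R)$ whenever $\psi(x,R)\ge K_0$ and the flatness is below $\varepsilon_0^2$), and then run the growth-vs.-linear-bound contradiction to conclude $\psi\le K_0/\theta^{n-1}$ for all $R\ge R_0$. The paper's proof is organized as $E_i:=\psi(x,R\theta^{-i})$ with the same iteration; the two minor differences are cosmetic: you derive the lower bound $|e_{x,R}|\ge c_0/2$ on the minimizing vector by Cauchy--Schwarz against the vector furnished by Theorem~\ref{thm preliminary} (slightly more elementary than the paper's appeal to the Alt--Caffarelli--Friedman monotonicity), and you use the $\psi(x,R)\le\varepsilon^2 R$ bound from Theorem~\ref{thm preliminary} where the paper invokes the global Lipschitz estimate, but both supply the same linear upper bound on $\psi$ that the geometric growth must violate.
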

\begin{proof}
Fix an $R>1$ and $x_0\in\{u=v\}$, which we assume to be the origin $0$. For each $i>0$, denote
\[R_i:=R\theta^{-i}.\]
Let $E_i$ and the vector $e_i$ be defined by
\[E_i:=\min_{e\in\mathbb{R}^n}R_i^{1-n}\int_{B_{R_i}(0)}|\nabla u-\nabla v-e|^2=R_i^{1-n}\int_{B_{R_i}(0)}|\nabla u-\nabla v-e_i|^2.\]

Note that for any fixed $e$,
\begin{eqnarray*}
R_i^{1-n}\int_{B_{R_i}(0)}|\nabla u-\nabla v-e|^2&\leq& R_i^{1-n}\int_{B_{\theta^{-1}R_i}(0)}|\nabla u-\nabla v-e|^2\\
&\leq& \theta^{1-n}\left(\theta^{-1}R_i\right)^{1-n}\int_{B_{\theta^{-1}R_i}(0)}|\nabla u-\nabla v-e|^2.
\end{eqnarray*}
Hence we always have
\begin{equation}\label{1.1}
E_i\leq\theta^{1-n}E_{i+1}.
\end{equation}
Furthermore, since (see \cite[Theorem 5.1]{W})
$$\sup_{\mathbb{R}^n}\left(|\nabla u|+|\nabla v|\right)<+\infty,$$
there exists a constant $C$, which is independent of $i$, such that
\begin{equation}\label{1.2}
E_i\leq C\theta^{-i}.
\end{equation}

By Theorem \ref{thm preliminary}, for any sequence $i\to+\infty$, there exists a subsequence (still denoted by $i$) such that
\[u_i(x):=R_i^{-1}u(R_ix)\to \left(e\cdot x\right)^+,\ \ \ \ v_i(x):=R_i^{-1}v(R_ix)\to \left(e\cdot x\right)^-.\]
Here $e$ is a vector in $\mathbb{R}^n$ satisfying $|e|\geq c_0$, and
the convergence is in $C_{loc}(\mathbb{R}^n)$ and also in
$H^1_{loc}(\mathbb{R}^n)$. Note that $(u_i,v_i)$ satisfies \eqref{equation scaled} with $\kappa_i=R_i^4$.

Indeed, by Theorem \ref{thm preliminary}, if $R_i\geq R_0$, where $R_0$ is
a constant depending only on $\varepsilon_0$, there exists a vector
$\bar{e}_i$ with $|\bar{e}_i|\geq c_0$ such that
\[\int_{B_{R_i}(0)}|\nabla u-\nabla v-\bar{e}_i|^2\leq \varepsilon_0^2 R_i^n.\]
By definition, if we replace $\bar{e}_i$ by $e_i$, we can get the same estimate. Thus by Theorem \ref{thm decay estimate}, if we also have
$E_{i+1}\geq K_0$, or equivalently,
\[\int_{B_1(0)}|\nabla u_{i+1}-\nabla v_{i+1}-e_{i+1}|^2\geq K_0R_{i+1}^{-1}=K_0\kappa_{i+1}^{-\frac{1}{4}},\]
then there exists another vector $\tilde{e}_{i+1}$ so that
\[\theta^{-n}\int_{B_\theta(0)}|\nabla u_{i+1}-\nabla v_{i+1}-\tilde{e}_{i+1}|^2\leq\frac{1}{2}\int_{B_1(0)}|\nabla u_{i+1}-\nabla v_{i+1}-e_{i+1}|^2.\]
This can be rewritten as
\begin{eqnarray}\label{1.3}
E_i&\leq& R_i^{1-n}\int_{B_{R_i}(0)}|\nabla u-\nabla v-\tilde{e}_{i+1}|^2\nonumber\\
&=&\theta^{-n}R_i\int_{B_\theta(0)}|\nabla u_{i+1}-\nabla v_{i+1}-\tilde{e}_{i+1}|^2\nonumber\\
&\leq&\frac{R_i}{2}\int_{B_1(0)}|\nabla u_{i+1}-\nabla v_{i+1}-e_{i+1}|^2\\ \nonumber
&=&\frac{1}{2}R_iR_{i+1}^{-n}\int_{B_{R_{i+1}}(0)}|\nabla u-\nabla v-e_{i+1}|^2\\ \nonumber
&=&\frac{\theta}{2}E_{i+1}.
\end{eqnarray}

Now we claim that for all $i\geq \min\{\frac{\log R_0-\log
R}{|\log\theta_0|},1\}$,
\begin{equation}\label{1.4}
E_i\leq \theta^{1-n}K_0.
\end{equation}
Assume by the contrary, there exists an $i_0\geq \min\{\frac{\log
R_0-\log R}{|\log\theta_0|},1\}$ such that
$E_{i_0}>\theta^{1-n}K_0$. First by \eqref{1.1},
\[E_{i_0+1}\geq\theta^{n-1}E_{i_0}>K_0.\]
Thus the assumptions of Theorem \ref{thm decay estimate} are satisfied and we have \eqref{1.3}, which says
\[E_{i_0+1}\geq\frac{2}{\theta}E_{i_0}\geq E_{i_0}>\theta^{1-n}K_0.\]
This can be iterated, and we get, for any $j\geq 0$,
\[E_{i_0+j+1}\geq\frac{2}{\theta}E_{i_0+j}\geq E_{i_0}\left(\frac{2}{\theta}\right)^{j+1}.\]
However, since $2/\theta>1/\theta$, this contradicts \eqref{1.2} if
$j$ is large enough.
 Note that the constant
$\theta^{1-n}K_0$ in \eqref{1.4} is independent of the base point
$x_0\in\{u=v\}$ and the radius $R$.
 Thus we get \eqref{1.4} for any $R\geq R_0$ and
$x\in\{u=v\}$. Then by choosing a larger constant, this can be extended to cover $[1,R_0]$ if we note the
global Lipschitz bound of $u$ and $v$.

We have shown the existence of $e_{x,R}$ for any $x\in\{u=v\}$ and
$R>1$. The lower bound for $|e_{x,R}|$ can be proved as in the proof
of Theorem \ref{thm preliminary} by using the
Alt-Caffarelli-Friedman inequality (see \cite[Theorem 4.3]{W}).
\end{proof}

With this theorem in hand, we can use $e_{x,R}\cdot (y-x)$ to
replace the harmonic replacement $\varphi_{R,x}$ in \cite[Section
7]{W}. The following arguments to prove Theorem \ref{main result}
are exactly the same one in \cite[Section 8 and 9]{W}.

The remaining part of this paper will be devoted to the proof of
Theorem \ref{thm decay estimate}.

\section{Some a priori estimates}
\numberwithin{equation}{section}
 \setcounter{equation}{0}

In this section, we present some a priori estimates for the solution
$(u,v)$. These estimates show that various quantities, when
integrated on $B_R(x)$, have a growth bound as $R^{n-1}$. This is
exactly what we expect for one dimensional solutions. Several
estimates from \cite{W} will be needed in this section.
\begin{lem}\label{lem measure estimate for level sets}
There exist two positive constants $C$ and $M$, such that for any $R>CM$ and $t\geq M$,
\[H^{n-1}(B_R\cap\{u=t\})\leq CR^{n-1}.\]
\end{lem}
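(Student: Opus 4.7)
The strategy is to bound the measure of the level set $\{u=t\}\cap B_R$ by applying the divergence theorem to the super-level set $\{u>t\}\cap B_R$ with a constant vector field pointed in the direction of the almost-linear growth of $u-v$, and then exploiting the flatness estimate of the preceding theorem to show the resulting surface integrand has a good sign on most of the level set.

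Fix $R$ large; after a harmless translation, assume a base point in $\{u=v\}$ sits at the origin. By the preceding theorem there is a vector $e=e_{0,R}$ with $|e|\ge c_0/2$ and
\begin{equation*}
\int_{B_R}|\nabla u-\nabla v-e|^2\,dx\le CR^{n-1}.
\end{equation*}
For a regular value $t\ge M$ of $u$ (available for a.e.\ $t$ by Sard's theorem), apply the divergence theorem to the constant field $e$ on $A_t:=\{u>t\}\cap B_R$; the outward normal is $-\nabla u/|\nabla u|$ on $\{u=t\}\cap B_R$ and $x/R$ on $\{u>t\}\cap\partial B_R$, so
\begin{equation*}
\int_{\{u=t\}\cap B_R}\frac{e\cdot\nabla u}{|\nabla u|}\,dH^{n-1}=\int_{\{u>t\}\cap\partial B_R}\frac{e\cdot x}{R}\,dH^{n-1}\le CR^{n-1}.
\end{equation*}

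To convert this into the desired upper bound on $H^{n-1}(\{u=t\}\cap B_R)$, the integrand on the left must be bounded below by a positive constant on most of the level set, which amounts to showing $\nabla u$ is close to $e$ there. On $\{u\ge M\}$ with $M$ large, the phase-separation estimates of \cite{W} force $v$ to be small; moreover the equation $\Delta v=u^2 v\ge M^2 v$ makes $v$ decay exponentially into the interior of $\{u\ge M\}$, and standard interior gradient estimates then force $|\nabla v|$ to be small on this set as well. Combined with the excess bound, this yields
\begin{equation*}
\int_{\{u\ge M\}\cap B_R}|\nabla u-e|^2\,dx\le CR^{n-1},
\end{equation*}
so the bad set $\Sigma:=\{x\in\{u\ge M\}\cap B_R:|\nabla u-e|>|e|/4\}$ has Lebesgue measure $|\Sigma|\le CR^{n-1}$, while on the good part the integrand is at least $c_0/6$, and the divergence identity controls $H^{n-1}((\{u=t\}\cap B_R)\setminus\Sigma)$ by $CR^{n-1}$.

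The main obstacle I anticipate is converting the $n$-dimensional Lebesgue bound on $\Sigma$ into an $(n-1)$-dimensional Hausdorff bound on $\{u=t\}\cap\Sigma$, since the Lipschitz bound on $u$ alone does not suffice (a Lipschitz function can have level sets of arbitrarily large measure inside a unit ball). I expect this step to use the extra $C^{1,\alpha}$-regularity of $u$ coming from the smallness of $\Delta u=uv^2$ on $\{u\ge M\}$, combined with a covering argument that controls the level-set measure uniformly in $t$. The extension from regular $t$ to every $t\ge M$ then follows by approximation and lower semicontinuity of Hausdorff measure, and the hypothesis $R\ge CM$ is used both to invoke the preceding theorem (via $R\ge R_0$) and to guarantee that $\{u>t\}\cap B_R$ is nontrivial via the linear growth of $u$.
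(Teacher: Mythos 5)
Your proposal has two serious problems, one of circularity and one of incompleteness.

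\textbf{Circularity.} The excess bound $\int_{B_R}|\nabla u-\nabla v-e|^2\le CR^{n-1}$ that you invoke at the start is the unlabeled theorem at the end of Section 2. But that theorem is deduced from the decay estimate (Theorem~\ref{thm decay estimate}), whose proof occupies Sections 4--5 and relies precisely on the a~priori estimates of Section 3, including Lemma~\ref{lem measure estimate for level sets}. So you cannot use that $R^{n-1}$ bound here. The only thing available at this stage is Theorem~\ref{thm preliminary}, which after rescaling gives an excess of order $\varepsilon^2 R^n$, not $R^{n-1}$; that is far too weak for your bad-set argument.

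\textbf{The gap you flagged is real and is not resolved by the route you suggest.} You correctly identify that a Lebesgue bound $|\Sigma|\le CR^{n-1}$ on the bad set does not control $H^{n-1}(\{u=t\}\cap\Sigma)$ for a \emph{fixed} $t$: the coarea formula would at best give an averaged-in-$t$ bound, and appealing to $C^{1,\alpha}$ regularity plus a covering argument is vague and does not obviously close this. The paper sidesteps the whole issue by a much shorter route that you did not find: by [W, Lemmas 5.2 and 5.4] one has the \emph{pointwise nondegeneracy} $|\nabla u|\ge c(M)$ on $\{u=t\}$ for all $t\ge M$. With this, every $t\ge M$ is a regular value (no Sard needed), and
\[
H^{n-1}(B_R\cap\{u=t\})\le c(M)^{-1}\int_{B_R\cap\{u=t\}}|\nabla u|
= -c(M)^{-1}\int_{B_R\cap\{u=t\}}\frac{\partial u}{\partial\nu}.
\]
Applying the divergence theorem to $\nabla u$ on $\{u>t\}\cap B_R$ and using $\Delta u\ge0$ together with the global Lipschitz bound on $u$ gives $\le CR^{n-1}$ immediately. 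In short: the paper tests against $\nabla u$, exploiting subharmonicity of $u$ and the gradient lower bound on the level set, rather than testing against a constant field $e$ and trying to show $\nabla u\approx e$ pointwise on the level set. The latter is both circular at this stage of the paper and leaves the Lebesgue-to-Hausdorff conversion unresolved.
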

\begin{proof}
First, by \cite[Lemma 5.2 and Lemma 5.4]{W}, there exists a constant
$c(M)>0$ such that,
\begin{equation}\label{2.0}
|\nabla u|\geq c(M)\ \ \ \mbox{on}\ \{u=t\}.
\end{equation}
Since $u$ is smooth, by the implicit function theorem $\{u=t\}$ is a
smooth hypersurface.

Now
\begin{equation}\label{2.1}
H^{n-1}(B_R\cap\{u=t\})\leq c(M)^{-1}\int_{B_R\cap\{u=t\}}|\nabla u|.
\end{equation}
Note that on $\{u=t\}$
\[|\nabla u|=-\frac{\partial u}{\partial\nu},\]
where $\nu$ is the unit normal vector of $\{u=t\}$ pointing to $\{u<t\}$. Then by the divergence theorem
\begin{eqnarray*}
-\int_{B_R\cap\{u=t\}}\frac{\partial u}{\partial\nu}&=&\int_{\partial B_R\cap\{u>t\}}\frac{\partial u}{\partial r}-\int_{B_R\cap\{u>t\}}\Delta u\\
&\leq&\int_{\partial B_R\cap\{u>t\}}|\nabla u|+\int_{B_R}\Delta u\\
&\leq&2\int_{\partial B_R}|\nabla u|\leq CR^{n-1}.
\end{eqnarray*}
Here we have used the global Lipschitz continuity of $u$, cf.
\cite[Theorem 5.1]{W}.
\end{proof}
The same results also hold for $v$ and $u-v$, which we do not repeat
here. Next we give a measure estimate for the transition part
$\{u\leq T, v\leq T\}$.
\begin{lem}\label{lem 2.2}
For any $T>1$, there exists a constant $C(T)>0$, such that for any $R>1$ and $x\in\mathbb{R}^n$,
\[H^n(B_R(x)\cap\{u\leq T, v\leq T\})\leq C(T)R^{n-1}.\]
\end{lem}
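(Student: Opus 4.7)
The plan is to decompose $E := B_R(x)\cap\{u\le T, v\le T\}$ into an ``outer'' part where $\max(u,v)\ge M$ and an ``inner'' transition layer where both $u,v\le M$, and to bound each piece by the coarea formula combined with Lemma~\ref{lem measure estimate for level sets} and pointwise gradient lower bounds. Here $M$ is the constant from Lemma~\ref{lem measure estimate for level sets}, and we may assume $T\ge M$ (otherwise $E\subset B_R(x)\cap\{u\le M,v\le M\}$ and we reduce to the inner case). The geometric picture driving the estimate is that $E$ is a ``tube'' of bounded thickness around the nodal set $\{u=v\}$.

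For the outer part, set $E_u := E\cap\{u\ge M\}$. By \cite[Lemmas 5.2 and 5.4]{W} we have the pointwise lower bound $|\nabla u|\ge c(M)>0$ on $\{u\ge M\}$, so the coarea formula and Lemma~\ref{lem measure estimate for level sets} yield
\[
c(M)\,H^n(E_u) \;\le\; \int_{E_u}|\nabla u|\,dx \;=\; \int_M^T H^{n-1}(E_u\cap\{u=t\})\,dt \;\le\; \int_M^T H^{n-1}(B_R(x)\cap\{u=t\})\,dt \;\le\; C(T-M)R^{n-1}.
\]
The analogous computation with $v$ handles $E_v := E\cap\{v\ge M\}$, giving $H^n(E_u\cup E_v)\le C(M,T)R^{n-1}$. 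For the inner piece $E_0 := E\cap\{u\le M, v\le M\}$, observe that $|u-v|\le M$ on $E_0$; applying coarea to $u-v$ together with the analog of Lemma~\ref{lem measure estimate for level sets} for the function $u-v$ (stated immediately after that lemma) gives
\[
\int_{E_0}|\nabla(u-v)|\,dx \;\le\; \int_{-M}^M H^{n-1}(B_R(x)\cap\{u-v=t\})\,dt \;\le\; CMR^{n-1},
\]
which, combined with a pointwise lower bound $|\nabla(u-v)|\ge c>0$ on $E_0$, yields $H^n(E_0)\le C(M)R^{n-1}$.

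The main obstacle is verifying the non-degeneracy of $\nabla(u-v)$ in the transition zone: we need both the extension of the $u-v$ analog of Lemma~\ref{lem measure estimate for level sets} to \emph{all} small levels $|t|\le M$ and a uniform pointwise bound $|\nabla(u-v)|\ge c>0$ on $\{u\le M,v\le M\}$. These are the direct counterparts for $u-v$ of \cite[Lemmas 5.2 and 5.4]{W}, and they rest on the fact, proved in Theorem~\ref{thm preliminary} and quantified by Theorem~\ref{thm decay estimate}, that $\nabla(u-v)$ is $L^2$-close to some $e$ with $|e|\ge c_0/2$ near the nodal set. A standard blow-up and compactness argument, exploiting the uniform H\"older and $H^1$ convergence already invoked after Theorem~\ref{thm preliminary}, then upgrades this $L^2$ closeness to the pointwise non-degeneracy used above, completing the proof.
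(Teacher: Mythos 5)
Your decomposition into outer ($\max(u,v)\ge M$) and inner ($u,v\le M$) parts and the treatment of the outer part via the coarea formula and Lemma~\ref{lem measure estimate for level sets} are fine. But the inner piece is where the proof breaks down. You require a uniform pointwise lower bound $|\nabla(u-v)|\ge c>0$ on $\{u\le M,v\le M\}$ and a level-set measure bound for $u-v$ at \emph{all} small levels $|t|\le M$, and you claim these are ``direct counterparts'' of \cite[Lemmas 5.2, 5.4]{W}. They are not: those lemmas give nondegeneracy of $\nabla u$ on $\{u\ge M\}$ (a region where the solution looks like a single linear ramp), whereas on the transition zone $\{u\le M,v\le M\}$ one is squarely in the singular layer, and the cited results give only $C^0$ and $H^1$-type control there, not a pointwise gradient lower bound. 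More seriously, you appeal to Theorem~\ref{thm decay estimate} to justify this step. That theorem is exactly what Section~3 (including this very lemma, via Lemma~\ref{lem 2.3}) is being used to prove, so invoking it here is circular.

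The paper's proof avoids all of this. It establishes a clean pointwise lower bound $u\ge c(T)$, $v\ge c(T)$ on $\{u\le T,v\le T\}$ (the ``Claim''), proved by a $C^0$-only blow-up argument showing the transition set stays within bounded distance of $\{u=v\}$, combined with Harnack and \cite[Lemma 4.7]{W}. Then
\[
H^n\bigl(B_R\cap\{u\le T,v\le T\}\bigr)\le c(T)^{-4}\int_{B_R\cap\{u\le T,v\le T\}}u^2v^2\le C(T)R^{n-1},
\]
the last step being \cite[Lemma 6.4]{W}. No coarea formula for $u-v$, no pointwise bound on $\nabla(u-v)$, and no appeal to later results are needed. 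To repair your argument you would have to actually establish the pointwise nondegeneracy of $\nabla(u-v)$ in the transition layer from scratch (it is plausible via interior $W^{2,p}$/$C^{1,\alpha}$ estimates for the bounded right-hand side $\Delta(u-v)=uv(v-u)$ on $\{u,v\le T\}$ plus a compactness argument, but that is a real additional proof, not a citation), and you must remove the dependence on Theorem~\ref{thm decay estimate}.
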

\begin{proof}
First we have the\\
{\bf Claim.} For each $T>1$, there exists a $c(T)>0$ such that, if $x_0\in\{u\leq T, v\leq T\}$, then
\begin{equation}\label{2.2}
u(x_0)\geq c(T),\ \ \ \ v(x_0)\geq c(T).
\end{equation}
By assuming this claim, we get
\[H^n(B_R(x)\cap\{u\leq T, v\leq T\})\leq c(T)^{-4}\int_{B_R(x)\cap\{u\leq T, v\leq T\}}u^2v^2\leq C(T)R^{n-1},\]
where in the last inequality we have used \cite[Lemma 6.4]{W}.

To prove the claim, first we note that, there exists a constant $C_1(T)$ such that
\begin{equation}\label{2.3}
\mbox{dist}(x_0,\{u=v\})\leq C_1(T).
\end{equation}
Indeed, if $\mbox{dist}(x_0,\{u=v\})\geq L$ ($L$ large to be chosen), take $y_0\in\{u=v\}$ to realize this distance and define
\[\tilde{u}(x)=\frac{1}{L}u(y_0+Lx),\ \ \ \ \tilde{v}(x)=\frac{1}{L}v(y_0+Lx).\]
Then by \cite[Lemma 5.2]{W}, there exits a vector $e$ and a universal constant $C$, with
\[\frac{1}{C}\leq |e|\leq C,\]
such that
\[|\tilde{u}(x)-\left(e\cdot x\right)^+|+|\tilde{v}(x)-\left(e\cdot x\right)^-|\leq h(L),\]
where $h(L)$ is small if $L$ large enough.

Without loss of generality we can assume
$B_1(L^{-1}(x_0-y_0))\subset\{\tilde{u}>\tilde{v}\}$. By a geometric
consideration, we have
\[L^{-1}(x_0-y_0)\cdot e\geq \frac{1}{2C}.\]
Consequently,
\[\tilde{u}(L^{-1}(x_0-y_0))\geq L^{-1}(x_0-y_0)\cdot e-h(L)\geq\frac{1}{4C}.\]
Thus $u(x_0)>T$ if $L$ large, which is a contradiction.

After establishing \eqref{2.3}, we can use the standard Harnack inequality and \cite[Lemma 4.7]{W} to deduce the claimed \eqref{2.2}.
\end{proof}

\begin{lem}\label{lem 2.3}
There exists a constant $C>0$, such that for any $R>1$ and $x\in\mathbb{R}^n$,
\[\int_{B_R(x)}|\nabla u||\nabla v|\leq CR^{n-1}.\]
\end{lem}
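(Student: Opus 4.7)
The plan is to decompose $B_R(x)$ based on the size of $u$ and $v$: Lemma \ref{lem 2.2} handles the transition region where both are moderate, and a coarea argument handles the complement. Concretely, fix a universal constant $T \geq M$ (with $M$ from Lemma \ref{lem measure estimate for level sets}, and $T$ to be chosen large), and set $\mathcal{T} = B_R(x) \cap \{u \leq T,\, v \leq T\}$, $\mathcal{U} = B_R(x) \cap \{u > T\}$, and $\mathcal{V} = B_R(x) \cap \{v > T\}$, so that $B_R(x) = \mathcal{T} \cup \mathcal{U} \cup \mathcal{V}$.

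On $\mathcal{T}$ the global Lipschitz estimate from \cite[Theorem 5.1]{W} gives $|\nabla u||\nabla v| \leq C^2$ pointwise, while Lemma \ref{lem 2.2} yields $H^n(\mathcal{T}) \leq C(T) R^{n-1}$; hence $\int_{\mathcal{T}} |\nabla u||\nabla v|\, dx \leq C R^{n-1}$. Since $\mathcal{U}$ and $\mathcal{V}$ play symmetric roles, it is enough to estimate the contribution from $\mathcal{U}$. For this I would apply the coarea formula
\[
\int_{\mathcal{U}} |\nabla u|\,|\nabla v|\, dx = \int_T^{\infty} \int_{B_R(x) \cap \{u = t\}} |\nabla v| \, dH^{n-1}(y) \, dt,
\]
and invoke Lemma \ref{lem measure estimate for level sets}, which provides $H^{n-1}(B_R(x) \cap \{u = t\}) \leq C R^{n-1}$ for every $t \geq T$. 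It then suffices to show that $|\nabla v|$ restricted to $\{u = t\}$ is bounded by some $g(t)$ with $\int_T^{\infty} g(t)\, dt < \infty$.

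The main obstacle is establishing this pointwise decay of $|\nabla v|$ on $\{u \geq t\}$ for large $t$; this is the quantitative form of phase separation needed in the argument. My approach would be: on $\mathcal{U}$ we have $\Delta v = v u^2 \geq T^2 v$, so $v$ satisfies $(\Delta - T^2)\,v \geq 0$, and together with the uniform bound $\min(u,v) \leq C$ (a consequence of the phase-separation analysis in \cite{W}, already used implicitly in Lemma \ref{lem 2.2}), a barrier comparison yields exponential decay of $v$ in the distance from $\partial \{u \geq T\}$. Interior gradient estimates for the equation $\Delta v = vu^2$, together with the global Lipschitz bound on $u$, then transfer this decay to $|\nabla v|$, giving a bound of the form $|\nabla v(y)| \leq C e^{-c t}$ on $\{u = t\}$.

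Alternatively, one may bypass the pointwise decay by testing the identity $\Delta(uv) = uv(u^2+v^2) + 2\nabla u \cdot \nabla v$ against a cutoff of the form $\chi(u)$ vanishing on $\{u \leq T\}$, and combining the resulting identity with the $L^1$-bounds $\int_{B_R} uv^2,\; u^2 v \leq C R^{n-1}$ (which follow from $\Delta u = uv^2$, $\Delta v = vu^2$ and the global Lipschitz bound via divergence theorem) and $\int_{B_R} u^2 v^2 \leq C R^{n-1}$ from \cite[Lemma 6.4]{W}. Either way, summing the contributions from $\mathcal{T}$, $\mathcal{U}$, and $\mathcal{V}$ yields the desired bound $C R^{n-1}$ with a constant independent of $R$ and $x$.
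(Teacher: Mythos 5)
Your proposal matches the paper's proof essentially step for step: the same decomposition into $\{u\le T, v\le T\}$, $\{u>T\}$, $\{v>T\}$, the same treatment of the transition region via Lemma~\ref{lem 2.2} and the global Lipschitz bound, and the same coarea argument on $\{u>T\}$ combined with the level-set measure bound of Lemma~\ref{lem measure estimate for level sets} and the exponential decay $|\nabla v|\le Ce^{-u/C}$ on $\{u>T\}$. The only difference is cosmetic: the paper simply cites this decay from \cite[Lemma 6.3]{W}, whereas you sketch a derivation of it via the differential inequality $\Delta v\ge T^2 v$, barrier comparison, and interior gradient estimates (converting ``decay in distance from $\partial\{u\ge T\}$'' to ``decay in $t$'' via the Lipschitz bound on $u$), which is a correct way to reproduce that ingredient.
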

\begin{proof}
Fix a $T>0$, which will be determined below. (It is independent of $x$ and $R$.) We divide the estimate into three parts, $\{u\leq T, v\leq T\}$, $\{u>T\}$ and $\{v>T\}$. Note that if $T$ is large enough, by \cite[Lemma 6.1]{W}, these three parts are disjoint.

First in $B_R(x)\cap\{u\leq T, v\leq T\}$, by the global Lipschitz continuity of
$u$ and $v$ \cite[Theorem 5.1]{W} and the previous lemma, we have
\begin{equation}\label{2.4}
\int_{B_R(x)\cap\{u\leq T, v\leq T\}}|\nabla u||\nabla v|\leq CH^n(B_R(x)\cap\{u\leq T, v\leq T\})\leq CR^{n-1}.
\end{equation}

If $T$ large, in $\{u>T\}$, $|\nabla u|\geq c(T)>0$ for a constant $c(T)$ depending only on $T$ (cf. the proof of Lemma \ref{lem measure estimate for level sets}). Furthermore, by the proof of \cite[Lemma 6.3]{W}, there exists a constant $C$ such that
\[|\nabla v|\leq Ce^{-\frac{u}{C}}\ \ \ \mbox{in}\ \{u>T\}.\]
Then by the co-area formula and Lemma \ref{lem measure estimate for level sets},
\begin{eqnarray*}
\int_{B_R(x)\cap\{u>T\}}|\nabla u||\nabla v|&=&\int_{T}^{+\infty}\left(\int_{B_R\cap\{u=t\}}|\nabla v|\right)dt\\
&\leq&C\int_{T}^{+\infty}e^{-ct} H^{n-1}(B_R\cap\{u=t\})dt\\
&\leq& CR^{n-1}.
\end{eqnarray*}
The same estimate holds for $\{v>T\}$. Putting these together we can finish the proof.
\end{proof}

\begin{lem}\label{lem 2.4}
There exists a constant $C>0$, such that for any $R>1$ and $x\in\mathbb{R}^n$,
\[\int_{B_R(x)}uv^3+vu^3\leq CR^{n-1}.\]
\end{lem}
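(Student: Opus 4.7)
The plan is to adapt the three-region decomposition used in the proof of Lemma \ref{lem 2.3}: fix a constant $T>0$ large enough that, by \cite[Lemma 6.1]{W}, the three sets $\{u\leq T,\,v\leq T\}$, $\{u>T\}$, $\{v>T\}$ are pairwise disjoint; in particular $v\leq T$ on the closure of $\{u>T\}$, and symmetrically $u\leq T$ on the closure of $\{v>T\}$. On the transition piece both $u$ and $v$ are pointwise bounded by $T$, so $uv^3+vu^3\leq 2T^4$, and Lemma \ref{lem 2.2} gives the contribution there as $\leq C(T)R^{n-1}$.

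On $\{u>T\}\cap B_R(x)$ I would exploit the equations to rewrite the integrand as $uv^3=v\Delta u$ and $u^3v=u\Delta v$, and then integrate by parts over this open set. Its boundary (modulo a negligible $(n-2)$-dimensional edge) is the disjoint union of the outer piece $\partial B_R(x)\cap\{u>T\}$ and the inner piece $B_R(x)\cap\{u=T\}$, the latter being a smooth hypersurface by \eqref{2.0}. Both resulting volume terms are, up to sign, $\int\nabla u\cdot\nabla v$, which is bounded by $CR^{n-1}$ via Lemma \ref{lem 2.3}.

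For the boundary term coming from $uv^3$, I would use $v\leq T$ together with the global Lipschitz bound $|\nabla u|\leq C$; the inner surface area is $\leq CR^{n-1}$ by Lemma \ref{lem measure estimate for level sets}, and the outer surface area is trivially $\leq CR^{n-1}$. For the boundary term coming from $u^3v$, on the inner part $\{u=T\}$ one has $u\equiv T$ and $|\nabla v|\leq Ce^{-T/C}$ by the exponential bound already invoked in the proof of Lemma \ref{lem 2.3}, so that piece is again controlled via Lemma \ref{lem measure estimate for level sets}. The delicate piece is the outer part $\partial B_R(x)\cap\{u>T\}$, on which $u$ can be as large as $C(1+|x|+R)$. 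The crucial observation that absorbs this growth is
\[u|\nabla v|\leq Cu\,e^{-u/C}\quad\text{on }\{u>T\},\]
and since $t\mapsto t e^{-t/C}$ is uniformly bounded on $[T,\infty)$, we get $u|\nabla v|\leq C$ globally on $\{u>T\}$; multiplying by $H^{n-1}(\partial B_R(x))\leq CR^{n-1}$ yields again $\leq CR^{n-1}$.

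The symmetric region $\{v>T\}\cap B_R(x)$ is handled identically. The only delicate point is precisely the outer boundary integral just discussed, where the linear growth of $u$ has to be absorbed into the exponential smallness of $|\nabla v|$; every other estimate is a direct consequence of the preceding lemmas of this section together with results from \cite{W} already cited in the proof of Lemma \ref{lem 2.3}.
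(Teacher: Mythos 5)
Your proof is correct, but it takes a genuinely different route from the paper on the regions $\{u>T\}$ and $\{v>T\}$. The paper does not integrate by parts: on $\{u>T\}$ it quotes from the proof of \cite[Lemma 6.1]{W} the pointwise bound $uv^3+vu^3\leq Ce^{-u/C}$, uses the co-area formula together with the lower bound $|\nabla u|\geq c(T)$ from \eqref{2.0}, and then sums the resulting layer integrals $e^{-ct}H^{n-1}(B_R\cap\{u=t\})$ via Lemma~\ref{lem measure estimate for level sets}. Your argument instead rewrites the integrand with the PDE as $uv^3=v\Delta u$ and $u^3v=u\Delta v$, integrates by parts over $\{u>T\}\cap B_R$, bounds the volume term $\int\nabla u\cdot\nabla v$ by Lemma~\ref{lem 2.3}, and controls the boundary terms via $v\leq T$ on $\{u>T\}$, the global Lipschitz bound, the exponential gradient decay $|\nabla v|\leq Ce^{-u/C}$ (already used for Lemma~\ref{lem 2.3}), and Lemma~\ref{lem measure estimate for level sets}. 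Both are valid and of comparable length; the paper's version is self-contained in the sense that it does not invoke Lemma~\ref{lem 2.3}, but it does require the separate pointwise exponential bound on $uv^3+vu^3$, whereas your version re-uses estimates already on the table (gradient decay of $v$, $\int|\nabla u||\nabla v|$) at the cost of an integration by parts and a slightly delicate outer-boundary term, which you correctly absorb using the boundedness of $t\mapsto te^{-t/C}$.
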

\begin{proof}
We still choose a $T>0$, which will be determined below,  and divide the estimate into three parts, $\{u\leq T, v\leq T\}$, $\{u>T\}$ and $\{v>T\}$.

By the proof of \cite[Lemma 6.1]{W}, we still have
\[uv^3+vu^3\leq C\ \ \ \mbox{in}\ \mathbb{R}^n.\]
Then in $B_R(x)\cap\{u\leq T, v\leq T\}$,
\[\int_{B_R(x)\cap\{u\leq T, v\leq T\}}uv^3+vu^3\leq CH^n(B_R(x)\cap\{u\leq T, v\leq T\})\leq CR^{n-1}.\]

Next, by the proof of \cite[Lemma 6.1]{W}, there exists a constant $C$ such that
\[uv^3+vu^3\leq Ce^{-\frac{u}{C}}\ \ \ \mbox{in}\ \{u>T\}.\]
Then by the co-area formula and the lower bound of $|\nabla u|$ in $\{u>T\}$ (i.e. \eqref{2.0}),
\begin{eqnarray*}
\int_{B_R(x)\cap\{u>T\}}uv^3+vu^3&=&\int_{M}^{+\infty}\left(\int_{B_R\cap\{u=t\}}\frac{uv^3+vu^3}{|\nabla u|}\right)dt\\
&\leq&C\int_{M}^{+\infty}e^{-ct} H^{n-1}(B_R\cap\{u=t\})dt\\
&\leq& CR^{n-1}.
\end{eqnarray*}
The same estimate holds for $\{v>T\}$. Putting these together we can finish the proof.
\end{proof}

\section{Blow up the stationary condition}
\numberwithin{equation}{section}
 \setcounter{equation}{0}

In this section and the next one, we prove Theorem \ref{thm decay estimate}. We argue by contradiction, so assume that as $\kappa\to+\infty$, there exists a sequence of solutions $(u_\kappa,v_\kappa)$ satisfying the conditions but not the conclusions in that theorem, that is,
\begin{equation}\label{assumption 1}
\int_{B_1(0)}|\nabla u_\kappa-\nabla v_\kappa-e|^2=\varepsilon_\kappa^2\to0,
\end{equation}
where $e$ is a vector satisfying $|e|\geq c_0$ (Without loss of generality, we can assume that $e=(0,\cdots,0,|e|)=|e|e_n$), but for any vector $\tilde{e}$ satisfying (here the constant $C(n)$ will be determined later)
\begin{equation}\label{assumption 4}
|\tilde{e}-e|\leq C(n)\varepsilon_\kappa,
\end{equation}
we must have ($\theta$ will be determined later)
\begin{equation}\label{assumption 2}
\theta^{-n}\int_{B_\theta(0)}|\nabla u_\kappa-\nabla v_\kappa-\tilde{e}|^2\geq\frac{1}{2}\varepsilon_\kappa^2.
\end{equation}
Moreover, we also assume that,
\begin{equation}\label{assumption 3}
\lim_{\kappa\to+\infty}\kappa^{1/4}\varepsilon_{\kappa}^2=+\infty.
\end{equation}
We will derive a contradiction from these assumptions.

The first step, which will be done in this section, is to show that
the blow up sequence
\[\frac{u_\kappa-v_\kappa-e\cdot x}{\varepsilon_\kappa},\]
converges to a harmonic function in some weak sense.

Recall that $(u_\kappa,v_\kappa)$ satisfies the stationary condition
\begin{equation}\label{stationary condition}
\int_{B_1(0)}\left(|\nabla u_\kappa|^2+|\nabla v_\kappa|^2+\kappa u_\kappa^2v_\kappa^2\right)\mbox{div}Y
-2DY(\nabla u_\kappa,\nabla u_\kappa)-2DY(\nabla v_\kappa,\nabla v_\kappa)=0.
\end{equation}

Since there exists an $R>0$ such that $\kappa=R^4$, and
\[u_\kappa(x)=R^{-1}u(Rx),\ \ \ \ v_\kappa(x)=R^{-1}v(Rx),\]
by \cite[Lemma 6.4]{W},
\[\int_{B_1(0)}\kappa u_\kappa^2v_\kappa^2=R^{-n}\int_{B_R(0)}u^2v^2\leq CR^{-1}=C\kappa^{-\frac{1}{4}}.\]
Similarly, by Lemma \ref{lem 2.3},
\[\int_{B_1(0)}|\nabla u_\kappa||\nabla v_\kappa|=R^{-n}\int_{B_R(0)}|\nabla u||\nabla v|\leq C\kappa^{-\frac{1}{4}}.\]
Then by a direct expansion, we get
\begin{equation}\label{stationary condition 2}
\int_{B_1(0)}|\nabla(u_\kappa-v_\kappa)|^2\mbox{div}Y-2DY(\nabla (u_\kappa-v_\kappa),\nabla(u_\kappa-v_\kappa))=O(\kappa^{-\frac{1}{4}}).
\end{equation}

Now let
\[w_\kappa:=\frac{u_\kappa-v_\kappa-e\cdot x}{\varepsilon_\kappa}-\lambda_\kappa,\]
where $\lambda_\kappa$ is chosen so that
\begin{equation}\label{mean value normalization}
\int_{B_1(0)}w_\kappa=0.
\end{equation}
Then
\[\int_{B_1(0)}|\nabla w_\kappa|^2=1,\]
and by noting \eqref{mean value normalization} we can apply the Poincare inequality to get
\[
\int_{B_1(0)}w_\kappa^2\leq C(n).
\]
Hence after passing to a subsequence of $\kappa$, we can assume that $w_\kappa$ converges to $w$,
weakly in $H^1(B_1(0))$ and strongly in $L^2(B_1(0))$.

Substituting $w_\kappa$ into \eqref{stationary condition 2}, we obtain
\begin{eqnarray}\label{3.1}
\nonumber O(\kappa^{-\frac{1}{4}})&=&\varepsilon_\kappa^2\int_{B_1(0)}\left[|\nabla w_\kappa|^2\mbox{div}Y-2DY(\nabla w_\kappa,\nabla w_\kappa)\right]\\
&&+2\varepsilon_\kappa\int_{B_1(0)}\left[\nabla w_\kappa\cdot e\mbox{div}Y-DY(\nabla w_\kappa,e)-DY(e,\nabla w_\kappa)\right]\\
\nonumber &&+\int_{B_1(0)}\left[|e|^2\mbox{div}Y+2DY(e,e)\right].
\end{eqnarray}
The last integral equals $0$. Integrating by
parts, we also have
\[\int_{B_1(0)}\left[\nabla w_\kappa\cdot e\mbox{div}Y-DY(\nabla w_\kappa,e)-DY(e,\nabla w_\kappa)\right]
=|e|\int_{B_1(0)}Y^n\Delta w_\kappa.\]
Substituting this into \eqref{3.1}, we obtain
\begin{equation}\label{2}
2|e|\int_{B_1(0)}Y^n\Delta w_\kappa
=-
\varepsilon_\kappa\int_{B_1(0)}\left[|\nabla
w_\kappa|^2\mbox{div}Y-2DY(\nabla w_\kappa,\nabla
w_\kappa)\right]+O(\kappa^{-\frac{1}{4}}\varepsilon_\kappa^{-1}).
\end{equation}
The right hand sides goes to $0$ as $\kappa\to0$, thanks to our
assumptions that $\varepsilon_\kappa\to0$ and
$\kappa^{1/4}\varepsilon_\kappa\to+\infty$. After passing to the
limit in the above equality, we see
\[\int_{B_1(0)}Y^n\Delta w=0.\]
Since $Y^n$ can be any function in $C_0^\infty(B_1(0))$, by standard elliptic theory we get
\begin{prop}
$w$ is a harmonic function.
\end{prop}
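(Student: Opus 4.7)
The plan is to pass to the limit $\kappa\to+\infty$ in the identity \eqref{2} and then invoke Weyl's lemma. First I would check that the right-hand side of \eqref{2} is $o(1)$. The bracketed integrand is a quadratic form in $\nabla w_\kappa$ whose coefficients are pointwise bounded by $\|Y\|_{C^1}$, and since $\|\nabla w_\kappa\|_{L^2(B_1(0))}=1$, the first contribution to the right-hand side is $O(\varepsilon_\kappa)=o(1)$. For the remaining error I would rewrite
\[
\kappa^{-1/4}\varepsilon_\kappa^{-1}=\varepsilon_\kappa\bigl(\kappa^{1/4}\varepsilon_\kappa^{2}\bigr)^{-1},
\]
so that \eqref{assumption 3} forces the second factor to vanish while $\varepsilon_\kappa\to 0$ controls the first. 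Hence the right-hand side of \eqref{2} tends to zero, and this is the only place where the quantitative hypothesis \eqref{assumption 3} is actually used.

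Next I would pass to the limit on the left-hand side. Given any $\varphi\in C_0^\infty(B_1(0))$, apply \eqref{2} with the vector field $Y=(0,\dots,0,\varphi)$, so that its $n$-th component is exactly $\varphi$. Since $w_\kappa$ is smooth, a single integration by parts gives
\[
\int_{B_1(0)}\varphi\,\Delta w_\kappa=-\int_{B_1(0)}\nabla\varphi\cdot\nabla w_\kappa.
\]
The weak convergence $\nabla w_\kappa\rightharpoonup\nabla w$ in $L^2(B_1(0))$, combined with $\nabla\varphi\in L^2(B_1(0))$, forces the right-hand side to converge to $-\int_{B_1(0)}\nabla\varphi\cdot\nabla w=\langle\Delta w,\varphi\rangle$ in the distributional sense on $B_1(0)$.

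Putting the two steps together, \eqref{2} in the limit becomes $2|e|\langle\Delta w,\varphi\rangle=0$ for every $\varphi\in C_0^\infty(B_1(0))$. Since $|e|\geq c_0>0$, this is equivalent to $\Delta w=0$ in $\mathcal{D}'(B_1(0))$, and Weyl's lemma (equivalently, standard elliptic regularity for the Laplacian) then upgrades $w$ to a smooth harmonic function on $B_1(0)$. There is no genuine obstacle here; the substantive work was already carried out in deriving \eqref{2}, and what remains is just the bookkeeping showing how the scale balance in \eqref{assumption 3} absorbs the error term while the $L^2$-weak convergence handles the linear main term.
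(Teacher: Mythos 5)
Your argument is correct and follows essentially the same route as the paper: pass to the limit in \eqref{2} using the weak $H^1$-convergence of $w_\kappa$ after noting the right-hand side vanishes thanks to $\varepsilon_\kappa\to0$ and \eqref{assumption 3}, then invoke Weyl's lemma. You merely spell out the integration by parts and the factorization $\kappa^{-1/4}\varepsilon_\kappa^{-1}=\varepsilon_\kappa(\kappa^{1/4}\varepsilon_\kappa^2)^{-1}$ that the paper leaves implicit.
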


\section{Strong convergence of the blow up sequence}
\numberwithin{equation}{section}
 \setcounter{equation}{0}

In this section, we prove the strong convergence of $w_\kappa$
in $H^1_{loc}(B_1(0))$. With some standard estimates on harmonic
functions, this will give the decay estimate Theorem \ref{thm decay
estimate}.

In order to prove the strong convergence of $w_\kappa$ in $H^1_{loc}(B_1(0))$, we define the defect measure $\mu$ by
\[|\nabla w_\kappa|^2dx\rightharpoonup|\nabla w|^2dx+\mu\ \ \ \ \mbox{weakly as measures.}\]
By the weak convergence of $w_\kappa$ in $H^1(B_1(0))$, $\mu$
is a positive Radon measure. Furthermore, $w_\kappa$ converges
strongly in $H^1_{loc}(B_1(0))$ if and only if $\mu=0$ in $B_1(0)$.

First we note the fact that
\begin{lem}\label{lem support of defect measure}
The support of $\mu$ lies in the hyperplane $\{x_n=0\}$.
\end{lem}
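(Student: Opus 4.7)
The plan is to establish strong $L^2$ convergence $\nabla w_\kappa \to \nabla w$ on every compact subset $K \subset B_1(0) \setminus \{x_n = 0\}$, which forces $\mu|_K = 0$ and hence $\mathrm{supp}(\mu) \subset \{x_n = 0\}$. First I would identify the uniform limit of $(u_\kappa, v_\kappa)$. Since by \eqref{scaled solutions} these are rescalings of a globally Lipschitz solution, the family is uniformly Lipschitz on $B_1(0)$, so after passing to a subsequence it converges uniformly on compact sets to some $(u_\infty, v_\infty)$. Phase-segregation results (\cite{C-L 2, NTTV}) yield $u_\infty v_\infty \equiv 0$, while \eqref{small condition} gives $\nabla(u_\infty - v_\infty) = e = |e|e_n$. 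Because the base point $x_0 \in \{u = v\}$ forces $u_\kappa(0) = v_\kappa(0)$, the limit satisfies $u_\infty(0) = v_\infty(0) = 0$; combined with nonnegativity, this identifies $u_\infty(x) = (|e|x_n)^+$ and $v_\infty(x) = (|e|x_n)^-$.

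Next, I will show that $\Delta w_\kappa \to 0$ uniformly on every compact $K \subset B_1(0) \cap \{x_n > 0\}$ (the case $x_n < 0$ is symmetric). A direct computation from \eqref{equation scaled} gives
\[
\Delta w_\kappa = -\frac{\kappa\, u_\kappa v_\kappa (u_\kappa - v_\kappa)}{\varepsilon_\kappa}.
\]
On $K$ the limit $|e|x_n$ is bounded below by some $c_K > 0$, so uniform convergence yields $u_\kappa \geq c_K/2$ on $K$ for $\kappa$ large. Via $u(x_0 + Rx) = R\, u_\kappa(x)$ with $R = \kappa^{1/4}$, the original solution satisfies $u \geq (c_K/2)\kappa^{1/4}$ at the corresponding points. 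The pointwise exponential decay $v \leq C e^{-u/C}$ in $\{u > T\}$, already invoked in the proofs of Lemmas \ref{lem 2.3} and \ref{lem 2.4}, then yields $v_\kappa \leq C e^{-c\kappa^{1/4}}$ on $K$. Combined with the Lipschitz bound on $u_\kappa - v_\kappa$ and the hypothesis $\kappa^{1/4}\varepsilon_\kappa^2 \to \infty$ (so that $\varepsilon_\kappa^{-1} \leq C\kappa^{1/8}$), this gives $\|\Delta w_\kappa\|_{L^\infty(K)} \leq C\kappa^{9/8} e^{-c\kappa^{1/4}} \to 0$.

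Finally, for $\phi \in C_c^\infty(B_1(0) \setminus \{x_n = 0\})$ I would integrate by parts:
\[
\int_{B_1(0)} |\nabla w_\kappa|^2 \phi^2 = -\int_{B_1(0)} w_\kappa \phi^2 \Delta w_\kappa - 2\int_{B_1(0)} w_\kappa \phi\, \nabla w_\kappa \cdot \nabla \phi.
\]
The first term tends to $0$ since $\Delta w_\kappa \to 0$ uniformly on $\mathrm{supp}(\phi)$ and $\|w_\kappa\|_{L^2(B_1)} \leq C$ by Poincar\'e. The second converges to $-2\int w\phi\, \nabla w \cdot \nabla \phi$ by weak-strong $L^2$ convergence ($w_\kappa \phi \nabla \phi \to w \phi \nabla \phi$ strongly, $\nabla w_\kappa \rightharpoonup \nabla w$ weakly). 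Since $\Delta w = 0$, the same identity applied to $w$ gives $\int |\nabla w|^2 \phi^2 = -2\int w \phi\, \nabla w \cdot \nabla \phi$, so $\int |\nabla w_\kappa|^2 \phi^2 \to \int |\nabla w|^2 \phi^2$ and $\mu$ vanishes on $\mathrm{supp}(\phi)$. The main obstacle is the middle step: the exponential smallness of the minority component is not supplied by \eqref{small condition} alone, and must be extracted by rescaling back to the global solution $(u,v)$ and invoking the decay estimates of \cite{W}; once this is in place, the weak-strong convergence argument is routine.
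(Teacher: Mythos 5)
Your proof is correct, and the overall strategy is the same as the paper's: identify the uniform limit of $(u_\kappa,v_\kappa)$ as $|e|(x_n^+,x_n^-)$, show that $\Delta w_\kappa\to 0$ uniformly on compact subsets of $B_1(0)\setminus\{x_n=0\}$ because the minority component is exponentially small where the majority component is bounded below, and then upgrade weak to strong $L^2_{loc}$ convergence of $\nabla w_\kappa$ there. Two of the technical steps are implemented differently, however. To bound $v_\kappa$ on $K\subset\{x_n>0\}$ you rescale back to the original solution and invoke the pointwise decay $v\leq Ce^{-u/C}$ in $\{u>T\}$ from \cite{W}, obtaining $v_\kappa\leq Ce^{-c\kappa^{1/4}}$; the paper works directly with the scaled functions, observing that $\Delta v_\kappa\geq\kappa h^2 v_\kappa$ on $\{x_n>2|e|^{-1}h\}$ and citing \cite[Lemma~4.4]{C-T-V 3} to get $v_\kappa\leq C(n)h e^{-\kappa^{1/2}h^2/C(n)}$, a slightly faster rate, though both easily overwhelm the polynomial factor $\kappa\varepsilon_\kappa^{-1}\leq C\kappa^{9/8}$. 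For the upgrade step, the paper derives a uniform interior $W^{2,2}$ bound on $w_\kappa$ from the now-controlled $\Delta w_\kappa$ and invokes Rellich compactness, whereas you test the equation with $w_\kappa\phi^2$ and pass to the limit in the Caccioppoli identity to show $\int|\nabla w_\kappa|^2\phi^2\to\int|\nabla w|^2\phi^2$ directly; your route is a little more elementary and avoids second-order elliptic estimates. Both variants are sound and interchangeable here.
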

\begin{proof}
By \cite[Theorem 2.7]{W} and \eqref{assumption 1}, as $\kappa\to+\infty$, $(u_\kappa,v_\kappa)$ converges to
$|e|(x_n^+,x_n^-)$ uniformly in $B_1(0)$. For any $h>0$, if $\kappa$ large,
\[u_\kappa\geq h,\ \ \ v_\kappa\leq h\ \ \ \ \mbox{in}\ \{x_n>2|e|^{-1}h\}.\]
Then
\[\Delta v_\kappa\geq \kappa h^2v_\kappa\ \ \  \mbox{in}\ \{x_n>2|e|^{-1}h\},\]
and by \cite[Lemma 4.4]{C-T-V 3},
\[v_\kappa\leq C(n)he^{-\frac{\kappa^{1/2}h^2}{C(n)}}\ \ \  \mbox{in}\ \{x_n>3|e|^{-1}h\}.\]

Note that $u_\kappa$ and $v_\kappa$ are uniformly bounded in
$B_1(0)$ because they are nonnegative, subharmonic. Then by
definition
\[
\Delta w_\kappa=\varepsilon_\kappa^{-1}\left[\kappa v_\kappa^2u_\kappa-\kappa u_\kappa^2 v_\kappa\right]
=O(\varepsilon_\kappa^{-1}e^{-\frac{\kappa^{1/2}h^2}{C(n)}})\to 0,\]
uniformly in $\{x_n\geq 4|e|^{-1}h\}$, thanks to our assumption that $\varepsilon_\kappa\gg\kappa^{-1/8}$. Applying standard interior $W^{2,2}$ estimates, together with the assumption \eqref{assumption 1},  and noting that $w_\kappa$ is uniformly bounded in $H^1_{loc}(B_1(0))$, we see
\[\int_{B_{4/5}(0)\cap\{x_n\geq 5|e|^{-1}h\}}|D^2w_\kappa|^2\]
is uniformly bounded. By Rellich compactness theorem, $\nabla
w_\kappa$ converges to $\nabla w$ strongly in
$L^2_{loc}(B_{4/5}\cap\{x_n\geq 5h\})$. In other words, the support
of $\mu$ lies in $\{x_n\leq 5|e|^{-1}h\}$. We can get the other side
estimate and also let $h\to0$ to finish the proof.
\end{proof}

For any $\eta\in C_0^\infty(B_1(0))$, by Lemma \ref{lem 2.4} and \cite[Lemma 6.4]{W},
\begin{eqnarray}\label{caccioppoli inequality 1}
&&\int_{B_1(0)}\Delta(u_\kappa-v_\kappa)(u_\kappa-v_\kappa)\eta^2\nonumber\\
&=&\int_{B_1(0)}\left(2\kappa u_\kappa^2v_\kappa^2-\kappa u_\kappa v_\kappa^3-\kappa v_\kappa u_\kappa^3\right)\eta^2\\
&=&R^{-n}\int_{B_R(0)}\left[2 u(y)^2v(y)^2-u(y) v(y)^3-v(y) u(y)^3\right]\eta(R^{-1}y)^2dy\nonumber\\
&=&O(R^{-1})=O(\kappa^{-1/4}).\nonumber
\end{eqnarray}
Substituting
\[\Delta(u_\kappa-v_\kappa)=\varepsilon_\kappa\Delta w_\kappa,
\ \ \
u_\kappa-v_\kappa=|e|x_n+\varepsilon_\kappa w_\kappa+\varepsilon_\kappa\lambda_\kappa,\]
into this, we get
\begin{equation}\label{1}
\int_{B_1(0)}\varepsilon_\kappa^2\Delta w_\kappa
w_{\kappa}\eta^2
+\varepsilon_\kappa|e|\Delta w_\kappa\eta^2
x_n+\varepsilon_\kappa^2\lambda_\kappa\Delta w_\kappa\eta^2=O(\kappa^{-1/4}).
\end{equation}
On the other hand, by taking $Y=(0,\cdots,0, \eta^2x_n)$ in
\eqref{2}, we have \footnote{This inequality could be understood as
a Caccioppoli type inequality, which is similar to the one in
Allard's regularity theory for stationary varifolds, see \cite[Lemma
8.11]{Allard} and \cite[Lemma 6.5.5]{Lin}. The choice of $Y$ has a
more direct geometric meaning (as a vector field in normal
directions) in that setting. However, we note that
\eqref{caccioppoli inequality 1} can also be understood as a
Caccioppoli type inequality, as in (for example) De
Giorgi-Nash-Moser theory for linear elliptic equations in divergence
form.}
\begin{eqnarray}\label{caccioppoli inequality 2}
&&2|e|\int_{B_1(0)}\Delta w_\kappa\eta^2 x_n\\
&=&-\varepsilon_\kappa\int_{B_1(0)}|\nabla w_\kappa|^2\left(\eta^2+2\eta\frac{\partial\eta}{\partial x_n}x_n\right)-2\sum_{i=1}^n\frac{\partial w_\kappa}{\partial x_n}\frac{\partial w_\kappa}{\partial x_i}\frac{\partial}{\partial x_i}\left(\eta^2x_n\right)
+O(\kappa^{-\frac{1}{4}}\varepsilon_\kappa^{-1}).\nonumber
\end{eqnarray}
Substituting this into \eqref{1} we see
\begin{eqnarray}\label{9}
&&\int_{B_1(0)}\Delta w_\kappa
w_{\kappa}\eta^2-\frac{1}{2}|\nabla w_\kappa|^2
\left(\eta^2+2\eta\frac{\partial\eta}{\partial
x_n}x_n\right)\\
&&\nonumber\ \ \ \ \ +\sum_{i=1}^n
\frac{\partial w_\kappa}{\partial
x_n}\frac{\partial w_\kappa}{\partial
x_i}\frac{\partial}{\partial
x_i}\left(\eta^2x_n\right)+\lambda_\kappa\Delta w_\kappa\eta^2=O(\kappa^{-\frac{1}{4}}\varepsilon_\kappa^{-2}).
\end{eqnarray}

Concerning the last term in this integral we claim that
\begin{lem}
For any $\eta\in C_0^\infty(B_1(0))$,
\[\lim_{\kappa\to+\infty}\int_{B_1(0)}\lambda_\kappa\Delta w_\kappa\eta^2=0.\]
\end{lem}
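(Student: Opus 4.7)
The plan is to integrate by parts twice to transfer the Laplacian from $w_\kappa$ onto $\eta^2$, and then exploit the harmonicity of the limit $w$. Since $\eta^2\in C_0^\infty(B_1(0))$,
$$\int_{B_1(0)}\lambda_\kappa\Delta w_\kappa\,\eta^2=\lambda_\kappa\int_{B_1(0)}w_\kappa\,\Delta(\eta^2).$$
By Rellich--Kondrachov, the weakly convergent sequence $w_\kappa\rightharpoonup w$ in $H^1(B_1(0))$ converges strongly in $L^2(B_1(0))$, and since $w$ is harmonic (established in the preceding proposition),
$$\int_{B_1(0)}w\,\Delta(\eta^2)=\int_{B_1(0)}\eta^2\,\Delta w=0.$$
Hence $\int_{B_1(0)}w_\kappa\,\Delta(\eta^2)=o(1)$ as $\kappa\to\infty$, and the lemma reduces to showing that $\lambda_\kappa$ remains bounded.

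For this, I would evaluate at the origin. Under the rescaling $u_\kappa(x)=R^{-1}u(Rx)$, $v_\kappa(x)=R^{-1}v(Rx)$ with $R=\kappa^{1/4}$, one has $u_\kappa(0)-v_\kappa(0)=R^{-1}(u(0)-v(0))=O(\kappa^{-1/4})$, so by the definition of $w_\kappa$ (and $e\cdot 0=0$),
$$w_\kappa(0)+\lambda_\kappa=\frac{u_\kappa(0)-v_\kappa(0)}{\varepsilon_\kappa}=O\!\left(\frac{\kappa^{-1/4}}{\varepsilon_\kappa}\right)=o(1)$$
by the hypothesis $\kappa^{1/4}\varepsilon_\kappa^2\to+\infty$. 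Thus $\lambda_\kappa=-w_\kappa(0)+o(1)$, and the boundedness of $\lambda_\kappa$ reduces to a uniform pointwise bound on $w_\kappa(0)$.

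The main obstacle is precisely this pointwise bound: the origin lies on the interface $\{x_n=0\}$ where $\Delta w_\kappa=\varepsilon_\kappa^{-1}\kappa u_\kappa v_\kappa(v_\kappa-u_\kappa)$ can concentrate. Away from the interface the exponential-decay argument used in Lemma \ref{lem support of defect measure} gives $\Delta w_\kappa\to 0$ uniformly, so $w_\kappa$ inherits uniform $C^{1,\alpha}$ bounds on compact subsets of the two open half-balls; the hard part is to propagate this control through the thin transition layer of width $O(\kappa^{-1/4})$ up to $\{x_n=0\}$. An alternative way to dispatch the lemma is to bypass the pointwise evaluation entirely: since $\int_{B_1}\Delta(\eta^2)=0$, one can rewrite
$$\lambda_\kappa\int_{B_1(0)}w_\kappa\,\Delta(\eta^2)=\frac{\lambda_\kappa}{\varepsilon_\kappa}\int_{B_1(0)}(u_\kappa-v_\kappa-e\cdot x)\,\Delta(\eta^2),$$
and then combine the mean-value bound $|\lambda_\kappa|\leq\alpha_\kappa/\varepsilon_\kappa$, where $\alpha_\kappa:=\|u_\kappa-v_\kappa-e\cdot x\|_{L^\infty(B_1)}\to 0$ by Lemma \ref{lem support of defect measure}, with the trivial estimate of the right-hand integral by $C\alpha_\kappa$; this yields a bound $C\alpha_\kappa^2/\varepsilon_\kappa^2$ and requires the quantitative rate $\alpha_\kappa=o(\varepsilon_\kappa)$, which is essentially the same difficulty in disguise.
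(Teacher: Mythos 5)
Your proposal correctly reduces the lemma to estimating the product $\lambda_\kappa\int_{B_1}w_\kappa\Delta(\eta^2)$, and your observation that the second factor is $o(1)$ by strong $L^2$ convergence and harmonicity of $w$ is valid. But as you yourself recognize, $o(1)\cdot\lambda_\kappa$ does not close the argument unless $\lambda_\kappa$ stays bounded, and you have no proof of that; the alternative estimate you sketch gives $\alpha_\kappa^2/\varepsilon_\kappa^2$, which, as you note, ``is essentially the same difficulty in disguise.'' So the proof as written contains a genuine gap: it needs a uniform pointwise bound on $w_\kappa(0)$ across the transition layer, which is not established and is not obviously true.

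The paper avoids this issue entirely by never attempting to bound $\lambda_\kappa$. Instead it combines two weaker-looking but correctly matched estimates. First, the uniform convergence of $u_\kappa-v_\kappa$ to $e\cdot x$ on $B_1$ (from Theorem \ref{thm preliminary}, together with $u_\kappa(0)-v_\kappa(0)=0$) gives only $\lambda_\kappa=o(\varepsilon_\kappa^{-1})$, which is much weaker than boundedness. Second, and this is the key step you are missing, the quantity $\int_{B_1}\eta^2\Delta w_\kappa$ is estimated not through two integrations by parts and $L^2$ convergence (which only yields $o(1)$), but by inserting the vertical vector field $Y=(0,\dots,0,\eta^2)$ into the rescaled stationary identity \eqref{2}. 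Since the bracketed integrand there is controlled by the uniform $H^1$ bound on $w_\kappa$, and the error $O(\kappa^{-1/4}\varepsilon_\kappa^{-1})$ is $o(\varepsilon_\kappa)$ by the hypothesis $\kappa^{1/4}\varepsilon_\kappa^2\to\infty$, one gets the quantitative bound $\int_{B_1}\eta^2\Delta w_\kappa=O(\varepsilon_\kappa)$. The product $o(\varepsilon_\kappa^{-1})\cdot O(\varepsilon_\kappa)=o(1)$, and the lemma follows. The structural information of the problem, encoded in the stationary condition, is what supplies the crucial linear-in-$\varepsilon_\kappa$ decay; the soft compactness argument you use cannot produce a rate and therefore cannot beat the possible growth of $\lambda_\kappa$.
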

\begin{proof}
By the definition of $\lambda_\kappa$ (see \eqref{mean value
normalization}),
\begin{eqnarray*}
\lambda_\kappa
&=&\frac{1}{\varepsilon_\kappa H^n(B_1(0))}\int_{B_1(0)}\left(u_\kappa-v_\kappa-e\cdot x\right)\\
&=&\frac{1}{\varepsilon_\kappa
H^n(B_1(0))}\int_{B_1(0)}\left(u_\kappa-v_\kappa\right).
\end{eqnarray*}
By Theorem \ref{thm preliminary}, $u_\kappa-v_\kappa$ converges
uniformly on $B_1(0)$ to the harmonic function $e\cdot x$, thanks to
the assumption that we always have (see the definition of $u_\kappa$
and $v_\kappa$, \eqref{scaled solutions})
\[u_\kappa(0)-v_\kappa(0)=0.\]
This then implies that
\[\lim_{\kappa\to+\infty}\int_{B_1(0)}\left(u_\kappa-v_\kappa\right)=0.\]
Hence we have
\begin{equation}\label{7}
\lambda_\kappa=o(\varepsilon_\kappa^{-1}).
\end{equation}

On the other hand, by choosing the vector field
$Y=(0,\cdots,0,\eta^2)$ in \eqref{2}, we have
\begin{equation}\label{8}
\int_{B_1(0)}\eta^2\Delta w_\kappa=O(\varepsilon_\kappa),
\end{equation}
where we have used the uniform bound on
$\|w_\kappa\|_{H^1(B_1(0))}$ and our assumption
\eqref{assumption 3}.

Combining \eqref{7} and \eqref{8} we get the required convergence.
\end{proof}

Note that \eqref{assumption 3} says
$\kappa^{-\frac{1}{4}}\varepsilon_\kappa^{-2}\to 0$. An integration
by parts in the first term in \eqref{9} gives
\begin{equation}\label{3}
\int_{B_1(0)}-2\eta w_\kappa
\nabla w_{\kappa}\nabla\eta-\frac{3}{2}|\nabla
w_\kappa|^2\eta^2-|\nabla
w_\kappa|^2\eta\frac{\partial\eta}{\partial
x_n}x_n+\sum_{i=1}^n \frac{\partial w_\kappa}{\partial
x_n}\frac{\partial w_\kappa}{\partial
x_i}\frac{\partial}{\partial x_i}\left(\eta^2x_n\right)\to0.
\end{equation}

Next we analyze the convergence in \eqref{3} term by term. By the
weak convergence of $\nabla w_\kappa$ and strong convergence of
$w_\kappa$ in $L^2(B_1(0))$,
\[\int_{B_1(0)}\eta w_\kappa \nabla w_{\kappa}\nabla\eta\to\int_{B_1(0)}\eta w\nabla w\nabla\eta.\]
By the weak convergence of $|\nabla w_\kappa|^2dx$,
\[\int_{B_1(0)}|\nabla w_\kappa|^2\eta^2\to\int_{B_1(0)}|\nabla w|^2\eta^2+\int_{B_1(0)}\eta^2d\mu,\]
and
\begin{eqnarray*}
\int_{B_1(0)}|\nabla w_\kappa|^2\eta\frac{\partial\eta}{\partial x_n}x_n&\to&\int_{B_1(0)}|\nabla w|^2\eta\frac{\partial\eta}{\partial x_n}x_n+\int_{B_1(0)}\eta\frac{\partial\eta}{\partial x_n}x_nd\mu\\
&=&\int_{B_1(0)}|\nabla w|^2\eta\frac{\partial\eta}{\partial x_n}x_n.
\end{eqnarray*}
Here we have used Lemma \ref{lem support of defect measure} and the fact that $\eta\frac{\partial\eta}{\partial x_n}x_n=0$ on $\{x_n=0\}$. Similarly,
\begin{equation*}
\int_{B_1(0)}\sum_{i=1}^n \frac{\partial w_\kappa}{\partial x_n}\frac{\partial w_\kappa}{\partial x_i}\frac{\partial}{\partial x_i}\left(\eta^2x_n\right)\to\int_{B_1(0)}\sum_{i=1}^n \frac{\partial w}{\partial x_n}\frac{\partial w}{\partial x_i}\frac{\partial}{\partial x_i}\left(\eta^2x_n\right)+\int_{B_1(0)}\eta^2d\mu^n,
\end{equation*}
where $\mu^n$ is the weak limit of the measures $(\frac{\partial w_\kappa}{\partial x_n})^2dx-(\frac{\partial w}{\partial x_n})^2dx$.

Substituting these into \eqref{3} we get
\begin{eqnarray}\label{4}
&&\int_{B_1(0)}-2\eta w \nabla w\nabla\eta-\frac{3}{2}|\nabla w|^2\eta^2
-|\nabla w|^2\eta\frac{\partial\eta}{\partial x_n}x_n+\sum_{i=1}^n \frac{\partial w}{\partial x_n}\frac{\partial w}{\partial x_i}\frac{\partial}{\partial x_i}\left(\eta^2x_n\right)\nonumber\\
&&\ \ \ \ \ \ \ \ \ \ \ \ \ \ \ \ \ \ \ -\frac{3}{2}\int_{B_1(0)}\eta^2d\mu+\int_{B_1(0)}\eta^2d\mu^n=0.
\end{eqnarray}

Since $w$ is a harmonic function, an integration by parts gives
\begin{equation}\label{5}
\int_{B_1(0)}-2\eta w \nabla w\nabla\eta=\int_{B_1(0)}|\nabla w|^2\eta^2.
\end{equation}
Note that $w$ is smooth. Then by standard domain variation
arguments, we also have a stationary condition for $w$, which
says, for any smooth vector field $Y$ with compact support in
$B_1(0)$,
\[\int_{B_1(0)}|\nabla w|^2\mbox{div}Y-2DY(\nabla w,\nabla w)=0.\]
By taking  $Y=(0,\cdots,0, \eta^2x_n)$ in this equality, we obtain
\begin{equation*}
\int_{B_1(0)}|\nabla w|^2\left(\eta^2+2\eta\frac{\partial\eta}{\partial x_n}x_n\right)-2\sum_{i=1}^n\frac{\partial w}{\partial x_n}\frac{\partial w}{\partial x_i}\frac{\partial}{\partial x_i}\left(\eta^2x_n\right)=0.
\end{equation*}
Substituting this and \eqref{5} into \eqref{4}, we get
\begin{equation}\label{6}
-\frac{3}{2}\int_{B_1(0)}\eta^2d\mu+\int_{B_1(0)}\eta^2d\mu^n=0.
\end{equation}
By the weak convergence of $w_\kappa$ in $H^1(B_1(0))$, we also have
\[|\nabla(w_\kappa-w)|^2dx\rightharpoonup\mu,\]
\[\left(\frac{\partial w_\kappa}{\partial x_n}-\frac{\partial w}{\partial x_n}\right)^2dx\rightharpoonup\mu^n.\]
From these we see for any $\eta\in C_0^\infty(B_1(0))$,
\[\int_{B_1(0)}\eta^2d\mu\geq\int_{B_1(0)}\eta^2d\mu^n.\]
 Substituting this into \eqref{6}, we see
\[\int_{B_1(0)}\eta^2d\mu\leq 0.\]
Since $\mu$ is a positive Radon measure, this implies that $\mu=0$, and hence the strong convergence of $w_\kappa$ in $H^1_{loc}(B_1(0))$.

With these preliminary analysis we come to the
\begin{proof}[Proof of Theorem \ref{thm decay estimate}]
Note that $w$ is a harmonic function satisfying
\[\int_{B_1(0)}|\nabla w|^2\leq 1.\]
By standard interior gradient estimates for harmonic functions, there exists a constant $C_1(n)$ depending only on the dimension $n$, such that for any $r\in(0,1/2)$,
\[\int_{B_r(0)}|\nabla w-\nabla w(0)|^2\leq C_1(n)r^{n+2}.\]
Here we have used the fact that each component of $\nabla w$ is
harmonic. By the mean value property for harmonic functions, there
exists another constant $C_2(n)$ still depending only on the
dimension $n$, such that
\[|\nabla w(0)|\leq C_2(n).\]
With these choices, now we fix the constant $C(n)$ in
\eqref{assumption 4} to be $2C_2(n)$.

Fix a $\theta\in(0,1/2)$ so that
$$2C_1(n)\theta^2\leq\frac{1}{4}.$$
Then by the strong convergence of $w_\kappa$ to $w$ in $H^1(B_{1/2})$, for $\kappa$ large,
\[\theta^{-n}\int_{B_\theta(0)}|\nabla w_\kappa-\nabla w(0)|^2
\leq\theta^{-n}\int_{B_\theta(0)}|\nabla w-\nabla w(0)|^2+\frac{1}{4}\leq
\frac{1}{2}.\] In other words,
\[\theta^{-n}\int_{B_\theta(0)}|\nabla(u_\kappa-v_\kappa)-\left(e+\varepsilon_\kappa\nabla w(0)\right)|^2\leq \frac{1}{2}\varepsilon_\kappa^2.\]
By our construction, $e+\varepsilon_\kappa\nabla w(0)$
satisfies \eqref{assumption 4}. The above inequality contradicts
\eqref{assumption 2} and we finish the proof of Theorem \ref{thm
decay estimate}.
\end{proof}

\bibliographystyle{amsplain}

\begin{thebibliography}{50}
\small

\bibitem{Allard}
W. Allard, On the first variation of a varifold, Ann. of Math. 95
(1972), no. 2, 417-491.

\bibitem{B-B-O}
F. Bethuel, H. Brezis and G. Orlandi, Asymptotics for the
Ginzburg-Landau equation in arbitrary dimensions, J. Funct. Anal.
186 (2001), no. 2, 432-520.

\bibitem{blwz}
H. Berestycki, T. Lin, J. Wei and C. Zhao, On phase-separation
model: asymptotics and qualitative properties, Arch. Ration. Mech. Anal. 208 (2013), no.1, 163-200.
\bibitem{BTWW}
{H. Berestycki, S. Terracini, K. Wang and J. Wei}, Existence and
stability of entire solutions of an elliptic system modeling phase
separation, {Adv. Math.} 243 (2013), 102-126.
\bibitem{C-C}
L. Caffarelli and A. Cordoba, An elementary regularity theory of minimal surfaces, Differential Integral
Equations, 1 (1993), 1-13.
\bibitem{C-L 2}
{ L. Caffarelli and F. Lin}, Singularly perturbed elliptic
systems and multi-valued harmonic functions with free boundaries, {
J. Amer. Math. Soc.}  21 (2008), 847-862.

\bibitem{Chen-Wu}
Y. Chen and L. Wu, Second order elliptic equations and elliptic
systems. Translated from the 1991 Chinese original by Bei Hu.
Translations of Mathematical Monographs, 174. American Mathematical
Society, Providence, RI, 1998.

\bibitem{C-T-V 3}
{M. Conti, S. Terracini and G. Verzini,} Asymptotic estimates for
the spatial segregation of competitive systems, {Adv. Math.} 195
(2005), no. 2, 524-560.

\bibitem{DWZ2011}
E. N. Dancer, K. Wang and Z. Zhang, The limit equation for the
Gross-Pitaevskii equations and S. Terracini's conjecture, J. Funct. Anal., 262 (2012), no. 2, 1087-1131.

\bibitem{D}
E. De Giorgi, Frontiere orientate di misura minima, Editrice tecnico
scientifica 1961.
\bibitem{G-T}
D. Gilbarg and N. Trudinger, Elliptic partial differential equations
of second order. Reprint of the 1998 edition. Classics in
Mathematics. Springer-Verlag, Berlin, 2001.


\bibitem{H-T}
J. Hutchinson and Y. Tonegawa, Convergence of phase interfaces in
the van der Waals-Cahn-Hilliard theory, Calculus of Variations and PDEs 10 (2000), no. 1, 49-84.

\bibitem{Lin}
{ F. Lin and X. Yang,} Geometric measure theory: an introduction.
Advanced Mathematics (Beijing/ Boston), Science Press, Beijing;
International Press, Boston, 2002.

\bibitem{NTTV}
{B. Noris, H. Tavares, S. Terracini and G. Verzini,} Uniform
H\"{o}lder bounds for nonlinear Schr\"{o}dinger systems with strong
competition, Comm. Pure Appl. Math. 63 (2010), no. 3, 267-302.
\bibitem{Savin}
O. Savin, Regularity of flat level sets in phase transitions, Ann. of Math. 169 (2009), 41-78.
\bibitem{TT2011}
{H. Tavares and S. Terracini}, {Regularity of the nodal set of
segregated critical configurations under a weak reflection law},
Calculus of Variations and PDEs 45 (2012), no. 3-4, 273-317.

\bibitem{Simon}
L. Simon, Theorems on regularity and singularity of energy
minimizing maps. Based on lecture notes by Norbert Hungerb\"{u}hler.
Lectures in Mathematics ETH Z\"{u}rich. Birkh\"{a}user Verlag,
Basel, 1996.

\bibitem{W}
K. Wang, On the De Giorgi type conjecture for an elliptic system modeling phase separation, to appear in Comm. PDE 2014.
\end{thebibliography}

\addcontentsline{toc}{section}{References}

\end{document}